\documentclass{article}
\usepackage{amsmath}
\usepackage{amssymb}
\usepackage{eucal}

\newtheorem{theorem}{Theorem}[section]

\newtheorem{corollary}[theorem]{Corollary}

\newtheorem{definition}[theorem]{Definition}
\newtheorem{example}[theorem]{Example}

\newtheorem{lemma}[theorem]{Lemma}

\newenvironment{proof}[1][Proof]{\textbf{#1 }}

\textwidth 6.6 in \textheight 8.6in \footskip 0.3in
\parskip 0.06in

\oddsidemargin 0in \evensidemargin 0in

\begin{document}
\baselineskip 15pt

\title{On $p$-nilpotency of finite groups\thanks{The
 work were supported by the NNSF of P. R. China (Grant 11071229) and
  the Scientific Research Foundation of CUIT(Grant J201114)}}
\author{Baojun Li$^1$ and Tuval Foguel$^2$\\
{\small 1. College of Mathematics,
Chengdu University of Information Technology }\\
 {\small Chengdu Sichuan 610225, P. R. China}\\
 {\small 2.Department of Mathematics and Computer Science, Western Carolina University}\\
{\small Cullowhee, NC 28723 USA}\\
{\small\  E-mail: baojunli@cuit.edu.cn; tsfoguel@wcu.edu}}

\date{}
\maketitle

\abstract Let $H$ be a subgroup  of a group $G$. $H$ is said
  satisfying $\Pi$-property in $G$, if $|G/K:N_{G/K}(HK/K\cap L/K)|$ is a
 $\pi(HK/K\cap L/K))$-number for any chief factor $L/K$ of $G$, and,
 if there is a subnormal supplement $T$ of $H$ in $G$ such
that $H\cap T\le I\le H$ for some subgroup $I$ satisfying
$\Pi$-property in $G$, then $H$ is said $\Pi$-normal in $G$. By
these properties of some subgroups, we obtain some new criterions of
$p$-nilpotency of finite groups.

\section{Introduction}
Throughout this paper, all groups  considered  are finite. Let $G$
be a group. $\pi(G)$ denotes the set of all prime divisors of $|G|$ and
$\pi$ denotes a subset of $\mathbb{P}$, the set of all primes, and $\pi'$ is
the complement of $\pi$ in $\mathbb{P}$.
 An integer $n$ is called a
$\pi$-number if all its prime divisors belong to $\pi$.

 Recall that a subgroup $A$ of a group $G$ is said to permute with a subgroup $B$
  if $AB =BA$. It is known that $AB$ is a subgroup of $G$ if and only if $A$
  permutes with $B$. Thus the permutability of subgroups is very important and is researched extensively. In the literature,
 seminormal subgroups  \cite{Fo,Su}, S-quasinormally
embedded subgroups \cite{BB-embedded}, $X$-permutable subgroups
 \cite{X-per}, c-normal subgroups  \cite{C-normal}, weakly s-permutable subgroups  \cite{Skiba}, and so on have been studied by many mathematicians.
To discuss on some essential properties of  these generalized permutabilities
of subgroups, the following definitions were arisen in \cite{pi}.

\begin{definition} Let $H$ be a subgroup of $G$. We call that $H$
has $\Pi$-property in $G$ if for any $G$-chief factor $L/K$,
$|G/K:N_{G/K}(HK/K\cap L/K)|$ is a $\pi(HK/K\cap L/K)$-number.
\end{definition}

\begin{definition} Let $H$ be a subgroup of $G$. If there is a subgroup $T$ of
$G$ such that $HT=G$ and  $H\cap T\le I\le H$, where $I$ has
$\Pi$-property in $G$, then $H$ is called $\Pi$-supplemented in $G$.
If, furthermore, $T$ is subnormal in $G$, then $H$ is called
$\Pi$-normal in $G$.
\end{definition}

By the work in \cite{pi}, we known that $\Pi$-property is an essential property of many  generalized permutabilities
of subgroups and many known results were uniformed by using $\Pi$-property and $\Pi$-normality of subgroups. In this paper, we shall do some
further research and give some new criterions of $p$-nilpotency of groups by the $\Pi$-normality of primary
subgroups of given order(primary subgroups of given order $|D|$ was first studied by A. Skiba (cf\cite{Skiba})).
 The main results in this paper is

\noindent{\bf Theorem A.} {\it
    Let $p$ be an odd prime and $N$ a normal subgroup of $G$ with $p$-nilpotent quotient. Assume
    that $P$ is a Sylow $p$-subgroup of $N$ and $N_G(P)$ is $p$-nilpotent. If there is an integer
     $m$ with $1<p^m<|P|$ such that every subgroup of $P$ of order $p^m$ not having $p$-nilpotent
      supplement in $G$ is $\Pi$-normal in $G$, then $G$ is $p$-nilpotent.}

 \noindent{\bf Theorem B.} {\it
    Let be $G$ be a group and $p$ a prime with $(|G|, p-1)=1$. Assume that $N$ is a normal subgroup
    of $G$ with $p$-nilpotent quotient and $P$  a Sylow $p$-subgroup of $N$. Suppose that there is
    an integer  $m$ with $1<p^m<|P|$ such that every subgroup of $P$ of order $p^m$ not having $p$-nilpotent
    supplement in $G$ is $\Pi$-normal in $G$, then $G$ is $p$-nilpotent if one of the following conditions holds:}

\noindent(i) $m\ge 2$;

\noindent(ii)  {\it $P$ is abelian  or $p>2$ is an odd prime;}

\noindent(iii) {\it every cyclic subgroup  of $P$ of order 4 not having $p$-nilpotent supplement in $G$ is $\Pi$-normal
in $G$;}

\noindent(iv) {\it $N$ is soluble and $P$ is quaternion-free.}

\section{Preliminaries}
 In this section, we list some lemmas which should be used in the proofs of our main results, and introduce some
  notions and terminologies. For notations and terminologies not given, the reader is referred to the texts
  of W. Guo \cite{GUO-BOOK} or B. Huppert \cite{Huppert-BOOK-I}

\begin{lemma}{\rm (\cite[Proposition 2.1]{pi})}\label{1}
Let $H$ be a subgroup of $G$ and $N$ a normal subgroup of $G$.

{\rm (1)} If $H$ has $\Pi$-property in $G$, then $HN/N$ has
$\Pi$-property in $G/N$.

{\rm (2)} If $H$ has $\Pi$-property in $G$, then $H$ is both
$\Pi$-normal and $\Pi$-supplemented in $G$.

 {\rm (3)} If $H$ is $\Pi$-normal $(\Pi$-supplemented$)$ in $G$,
 then   $HN/N$ is $\Pi$-normal $(\Pi$-supplemented$)$ in $G/N$ when
 $N\subseteq H$ or $(|H|,|N|)=1$.
\end{lemma}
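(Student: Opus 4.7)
The plan is to handle the three claims in sequence; (1) and (2) reduce to routine identifications, while the coprime subcase of (3) is the main obstacle.

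For (1), I would apply the correspondence theorem. Every chief factor of $G/N$ has the form $(L/N)/(K/N)$ for $L/K$ a $G$-chief factor with $N\le K$. Because $N\le K$ forces $HNK = HK$, one has $(HN/N)(K/N)/(K/N) = HK/K$, and the canonical isomorphism $(G/N)/(K/N)\cong G/K$ preserves normalizers. Hence the normalizer-index condition defining the $\Pi$-property of $HN/N$ at $(L/N)/(K/N)$ coincides with the $\Pi$-property of $H$ at $L/K$. For (2), the choice $T = G$ (trivially subnormal) and $I = H$ yields $HT = G$, $H\cap T = H = I$, and $I$ has $\Pi$-property by hypothesis, so $H$ is both $\Pi$-normal and $\Pi$-supplemented.

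For (3), let $T$ and $I$ witness the hypothesis on $H$, and set $\bar T := TN/N$, $\bar I := IN/N$. Then $\bar T$ is subnormal in $G/N$ whenever $T$ is subnormal in $G$, $(HN/N)\bar T = G/N$, and by part (1), $\bar I$ has $\Pi$-property in $G/N$ with $\bar I\le HN/N$. It therefore suffices to verify $(HN/N)\cap\bar T\le\bar I$. When $N\le H$, one has $HN = H$, so $(HN/N)\cap\bar T = (H\cap TN)/N$, and Dedekind's modular law (with $N\le H$) gives $H\cap TN = N(H\cap T)\subseteq IN$, finishing this subcase.

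The case $(|H|,|N|) = 1$ is the heart of the matter. Here Dedekind applied to $N\le HN$ yields only $HN\cap TN = (H\cap TN)N$, and $H\cap TN$ can strictly contain $H\cap T$, so $IN/N$ need not contain $(HN/N)\cap\bar T$ for the original $T$. The plan is to exploit Schur-Zassenhaus: coprimality makes $H$ a complement of $N$ in $HN$, and the isomorphism $HN/N\cong H$ identifies $(HN/N)\cap\bar T$ with the subgroup $H\cap TN$ of $H$. From here I see two options: either enlarge $I$ within $H$ to a subgroup $I^*$ containing $H\cap TN$ and retaining $\Pi$-property in $G$ (verified chief factor by chief factor, using the $H\ltimes N$ structure of $HN$ to relate normalizers of intersections with $IN$ to those with $I^*N$), or replace $T$ by a subnormal supplement $T^*$ of $H$ in $G$ for which $H\cap T^*N\le I$ holds directly. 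Carrying out the normalizer verification in either formulation is the technical crux of the argument.
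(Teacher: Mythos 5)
Parts (1) and (2), and the $N\subseteq H$ subcase of (3), are correct and essentially forced (the paper itself gives no proof of this lemma --- it only cites \cite[Proposition 2.1]{pi} --- so there is nothing to compare against beyond the statement). The problem is the coprime subcase of (3), which you explicitly leave open: you correctly compute via Dedekind that $(HN/N)\cap(TN/N)=(H\cap TN)N/N$ and note that $H\cap TN$ may exceed $H\cap T$, but you then only gesture at two repair strategies (enlarging $I$ inside $H$, or replacing $T$) without carrying either out. As written this is a genuine gap, and moreover both proposed repairs are misdirected: no new $I$ or $T$ is needed.

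The missing idea is that the obstruction you identify never actually occurs, because $T$ is \emph{subnormal}. Since $HT=G$, the index $|G:T|=|H:H\cap T|$ is a $\pi(H)$-number, and a standard fact about subnormal subgroups (proved by induction on the defect, and used repeatedly in this very paper in the form ``$|G:T|$ a $p$-number implies $O^p(G)\subseteq T$'') gives $O^{\pi(H)}(G)\subseteq T$. Since $(|H|,|N|)=1$, every element of $N$ is a $\pi(H)'$-element, hence $N\subseteq O^{\pi(H)}(G)\subseteq T$. Thus $TN=T$, Dedekind gives $HN\cap T=(H\cap T)N\subseteq IN$, and the images $T/N$ and $IN/N$ of the original witnesses already do the job, exactly as in your $N\subseteq H$ subcase, with $IN/N$ having $\Pi$-property in $G/N$ by part (1). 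Your worry is in fact sharp in the absence of subnormality: since $HT=G$ one has $|HN\cap TN|=|H\cap T|\,|N|\cdot|N:N\cap T|$, so $(HN/N)\cap(TN/N)$ equals $(H\cap T)N/N$ precisely when $N\subseteq T$ --- which is exactly what subnormality guarantees here.
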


\begin{lemma}\label{B-p}
  Let $p$ be a prime and $N$ a normal subgroup of $G$ with a $p$-nilpotent quotient.  Assume that $P$ is a
  Sylow $p$-subgroup of $N$ and $N_G(P)=C_G(P)$. Then $G$ is $p$-nilpotent.
\end{lemma}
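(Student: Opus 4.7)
The plan is to apply Burnside's normal $p$-complement theorem to $N$, then use the Frattini argument together with the hypothesis $N_G(P)=C_G(P)$ to show that the resulting $p$-complement is in fact normal in $G$ and that the Sylow $p$-subgroup of $N$ becomes central modulo this complement, at which point lifting $p$-nilpotency through a central $p$-section finishes the job.

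First, I observe that $N_N(P)=N_G(P)\cap N=C_G(P)\cap N=C_N(P)$. Since $P\in\mathrm{Syl}_p(N)$, Burnside's theorem gives that $N$ is $p$-nilpotent. Let $K=O_{p'}(N)$, so $N=K\rtimes P$. Being characteristic in $N\triangleleft G$, $K$ is normal in $G$. The target is now to show $G/K$ is $p$-nilpotent, for then the $p$-complement of $G/K$ pulls back to the $p$-complement of $G$ (as $K$ is itself a $p'$-group).

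Next, I apply the Frattini argument to $P\in\mathrm{Syl}_p(N)$ to get $G=N\cdot N_G(P)$. Because $N=KP$ and $P\le N_G(P)$, this simplifies to $G=K\cdot N_G(P)$. Now I use $N_G(P)=C_G(P)$ crucially: given any $g\in G$, write $g=kc$ with $k\in K$ and $c\in C_G(P)$; then for any $x\in P$,
\[
[g,x]=gxg^{-1}x^{-1}=k(cxc^{-1})k^{-1}x^{-1}=kxk^{-1}x^{-1}=[k,x]\in[K,P]\le K,
\]
the last inclusion holding because $P$ normalizes $K$ (as $K\triangleleft N$). Hence $[G,P]\le K$, so $PK/K=N/K$ lies in $Z(G/K)$.

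Finally, $G/K$ contains the central $p$-subgroup $N/K$ with $(G/K)/(N/K)\cong G/N$ being $p$-nilpotent by hypothesis. It is standard that a central $p$-extension of a $p$-nilpotent group is $p$-nilpotent: letting $\overline M/\overline N$ be the normal $p'$-complement in $G/N$ and $M$ its preimage in $G/K$, the Sylow $p$-subgroup of $M$ equals the central subgroup $N/K$, so by Schur--Zassenhaus $M=(N/K)\times L$ with $L$ characteristic in $M$, giving a normal $p'$-complement for $G/K$. Pulling back through $K$, $G$ is $p$-nilpotent.

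The only nontrivial step is the commutator calculation showing $N/K$ becomes central in $G/K$; once this is in hand, the remaining reductions are routine. I expect that step to be the main obstacle only in the sense of correctly marrying the Frattini decomposition with the centralizer hypothesis.
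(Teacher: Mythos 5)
Your proof is correct, and its skeleton matches the paper's: apply Burnside to $N$ (using $N_N(P)=C_N(P)$), factor out the normal $p$-complement $K=O_{p'}(N)$ of $N$, and finish by observing that a central $p$-subgroup with $p$-nilpotent quotient forces $p$-nilpotency. The one place you genuinely diverge is the middle step. The paper handles the passage to $G/K$ by induction on $|G|$, asserting that ``the hypotheses still hold on $G/Q$'' --- which tacitly requires checking that $N_{G/Q}(PQ/Q)=C_{G/Q}(PQ/Q)$, a point the paper does not spell out --- and only in the base case $Q=1$ does it conclude $P\le Z(G)$. You instead run the Frattini argument $G=N\,N_G(P)=K\,C_G(P)$ and compute $[g,x]=[k,x]\in K$ directly, obtaining $N/K\le Z(G/K)$ in one stroke without any induction and without needing to re-verify the normalizer-equals-centralizer hypothesis in the quotient. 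Your commutator calculation is valid (it uses only that $c$ centralizes $x$ and that $K\trianglelefteq G$), and the Schur--Zassenhaus step splitting $\overline M=(N/K)\times L$ with $L$ characteristic is a standard and correct way to lift $p$-nilpotency through a central $p$-section. In short: same strategy, but your explicit Frattini computation is arguably cleaner and makes rigorous a step the paper leaves implicit.
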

\begin{proof}
    Since $N_N(P)=N\cap N_G(P)=N\cap C_G(P)=C_N(P)$, $N$ is $p$-nilpotent by Burnside
    Theorem (cf. \cite[Ch 7, Theorem 4.3]{Gorenstein-BOOK}). Let $Q$ be the normal $p$-complement of $N$.
    Then the hypotheses still hold on $G/Q$. If $Q\ne 1$ then $G/Q$ is $p$-nilpotent by induction on $|G|$
    and so $G$ is $p$-nilpotent. Assume $Q=1$. Then $N=P$ and $G=N_G(P)=C_G(P)$, that is $P\le Z(G)$.
    Since $G/N$ is $p$-nilpotent, we see that $G$ is $p$-nilpotent and the lemma holds.
\end{proof}

Recall, a generalized Fitting subgroup of a group $G$, denoted by
$F^*(G)$, is the maximal quasinilpotent normal subgroup of $G$ and
$F^*(G)=F(G)$ if and only if $F^*(G)$ is soluble (cf.\cite[Chapter X
$, \S$13]{Huppert-BOOK-III}). The supersoluble-hypercentral,
$Z_\infty^\mathcal{U}(G)$, of a group $G$ is the maximal normal
subgroup of $G$ with all $G$-chief factor lying in it is cyclic (it
is also called the supersoluble embedded subgroup of $G$ in
\cite{Weinstein}).

\begin{lemma}{\rm(\cite[Lemma 2.17]{pi})}\label{Fit} Let $G$ be a group and $E$ a  normal subgroup
of $G$. If $F^*(E)\subseteq Z_\infty^\mathcal{U}(G)$, then
$E\subseteq Z_\infty^\frak{U}(G)$.
\end{lemma}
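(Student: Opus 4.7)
The plan is to induct on $|E|$, with $E=1$ as the trivial base case. The hypothesis forces every $G$-chief factor of $F^*(E)$ to be cyclic of prime order, so $F^*(E)$ is soluble; by the characterization in \cite{Huppert-BOOK-III} recalled just above the lemma, this gives $F^*(E) = F(E)$ and the layer $E(E)$ is trivial. Picking a minimal normal subgroup $N$ of $G$ with $N\subseteq E$, standard theory places $N$ inside $F^*(E)$ (abelian minimal normals lie in $F(E)$; non-abelian ones would have to lie in $E(E)=1$), so the hypothesis $F^*(E)\subseteq Z_\infty^\mathcal{U}(G)$ forces $N$ to be cyclic of prime order.

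The core idea is to verify that each $G$-chief factor of $E$ is cyclic, splitting according to position relative to $F^*(E)$. Factors inside $F^*(E)$ are cyclic directly from the hypothesis. For factors above $F^*(E)$ the key input is $C_E(F^*(E))\subseteq F^*(E)$, valid because $F^*(E)$ is quasinilpotent, so $E/F^*(E)$ embeds into $\mathrm{Out}(F^*(E))$. Taking a $G$-invariant series $1=F_0<F_1<\cdots<F_r=F^*(E)$ with cyclic factors of prime order $p_i$, the $G$-action on each $F_i/F_{i-1}$ lands in the abelian group $\mathrm{Aut}(F_i/F_{i-1})\cong C_{p_i-1}$. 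Letting $C$ be the simultaneous stabilizer of this series in $G$, one gets that $G/C$ is abelian and, by the Kaloujnine--Fitting stability theorem, $C/C_G(F^*(E))$ is nilpotent. Combining these structural constraints on $G/C_G(F^*(E))$ with the embedding $E/F^*(E)\hookrightarrow\mathrm{Out}(F^*(E))$ should yield that every $G$-chief factor of $E/F^*(E)$ is cyclic, completing the argument.

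The main obstacle is precisely the passage above $F^*(E)$. A naive attempt to pass to $G/N$ and reapply the hypothesis fails, because $F^*$ does not commute with quotients: $F^*(E/N)$ can strictly exceed $F^*(E)/N$ (take $G=E=C_p\rtimes C_{p-1}$, where $F^*(E)=C_p$ but $F^*(E/N)=C_{p-1}$). Consequently one cannot simply quotient out by $N$ and invoke induction; instead the argument must directly exploit the interplay between the cyclic $G$-module structure of the chief series of $F^*(E)$ and the faithful action of $E/F^*(E)$ on $F^*(E)$, which is the technically delicate part of the proof.
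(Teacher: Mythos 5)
This lemma is not proved in the paper at all --- it is imported verbatim from \cite[Lemma 2.17]{pi} --- so there is no internal proof to compare against; judged on its own terms, your proposal assembles the right ingredients but does not close the argument. The reductions you make are fine: $F^*(E)$ is supersoluble, hence soluble, hence $F^*(E)=F(E)=:F$; $C_E(F)\le F$; and every $G$-chief factor below $F$ is cyclic by hypothesis. The problem is the step you yourself flag as ``technically delicate'': you assert that the nilpotent-by-abelian structure of $G/C_G(F)$ together with the embedding of $E/F$ into a section of $\mathrm{Aut}(F)$ ``should yield'' that the $G$-chief factors of $E$ above $F$ are cyclic, but you never derive this, and it is exactly the content of the lemma. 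Those two structural facts alone do not formally exclude a noncyclic $G$-chief factor above $F$. (Also, the induction on $|E|$ you set up is never used: as you observe yourself, passing to $G/N$ destroys the hypothesis on $F^*$, so the inductive framework does no work.)

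The missing argument can be completed along the lines you started, and it is worth seeing where the extra input comes from. Let $1=F_0<\dots<F_r=F$ be a $G$-chief series of $F$ and $D=\bigcap_i C_G(F_i/F_{i-1})$, so that $G/D$ is abelian. The key claim is $D\cap E\le F$. Indeed, $T:=D\cap E$ is normal in $G$, its image in $\mathrm{Aut}(F)$ stabilizes the series and is therefore nilpotent (Kaloujnine--Hall), and $C_T(F)\le C_E(F)\le F$; for each prime $p$, letting $S_p$ be the preimage in $T$ of the Sylow $p$-subgroup of $T/C_T(F)$, one checks that $S_pF$ is a nilpotent normal subgroup of $G$ contained in $E$ (using that a $p$-group stabilizing a chain in $O_q(F)$ with $q\ne p$ must act trivially, since the stability group of a chain in a $q$-group is a $q$-group), whence $S_pF\le F(E)=F$ and so $T\le F$. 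With this in hand, $[E,G]\le E\cap G'\le E\cap D\le F$, so every $G$-chief factor of $E$ above $F$ is $G$-central and hence cyclic of prime order, which together with the hypothesis on the factors below $F$ gives $E\subseteq Z_\infty^{\frak U}(G)$. It is the step $D\cap E\le F(E)$ --- which uses $F^*(E)=F(E)$ a second time, now as a maximality statement about nilpotent normal subgroups of $E$ --- and the resulting centrality of the upper factors that your proposal is missing, so as written it has a genuine gap.
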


\begin{lemma}\label{2}{\rm(\cite[Proposition 2.7]{pi})} Let $H$ be a $p$-subgroup of $G$ and $N$ a minimal normal
 subgroup of $G$. Assume that $H$ is
$\Pi$-normal in $G$. If there is a Sylow p-subgroup $G_p$ of $G$ such that $H \cap N \unlhd G_p$,
then $H \cap N = N$ or 1.
\end{lemma}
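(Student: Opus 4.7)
The plan is to unpack the hypothesis of $\Pi$-normality: there exist a subnormal subgroup $T$ of $G$ with $G=HT$ and a subgroup $I$ with $H\cap T\le I\le H$ such that $I$ has the $\Pi$-property. The key application will be of the $\Pi$-property of $I$ to the chief factor $N=N/1$: since $I$, being a subgroup of the $p$-group $H$, is itself a $p$-group, $I\cap N$ is a $p$-group, and so $[G:N_G(I\cap N)]$ is a $p$-number. The complementary piece of leverage is the hypothesis $H\cap N\unlhd G_p$: it says $G_p\le N_G(H\cap N)$, and hence the $G$-orbit of $H\cap N$ has $p'$-size. The proof should marry these two ingredients with the minimality of $N$.

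First I would record two standard consequences of having $G=HT$ with $H$ a $p$-group and $T$ subnormal: the index $[G:T]=[H:H\cap T]$ is a $p$-power, so $T\supseteq O^p(G)$; and since $G_p\cap T$ is a Sylow $p$-subgroup of the subnormal subgroup $T$, we get $G=G_pT$. I would then dichotomise on $N$. If $N$ is not a $p$-group, then $N\cap O^p(G)$ is a normal subgroup of $G$ contained in $N$, and it is nontrivial (otherwise $N$ would embed into the $p$-group $G/O^p(G)$); minimality forces $N\le O^p(G)\le T$. In particular non-abelian minimal normal subgroups are automatically subsumed by the case $N\le T$, and the only alternative is that $N$ is an elementary abelian $p$-group with $N\not\le T$.

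When $N\le T$, the chain $H\cap N\le H\cap T\le I$ gives $I\cap N=H\cap N$; then $G_p\le N_G(H\cap N)=N_G(I\cap N)$ combines with the $\Pi$-property bound to force $[G:N_G(I\cap N)]=1$, so that $I\cap N\unlhd G$, and minimality of $N$ yields $H\cap N\in\{1,N\}$. The hard part, and the main obstacle, is the remaining case in which $N$ is an elementary abelian $p$-group and $N\not\le T$: here $I\cap N$ may be strictly smaller than $H\cap N$, and the identification $I\cap N=H\cap N$ breaks down. My plan there is to show instead that $H\cap N$ is itself $T$-invariant --- using the modular-law identity $IT\cap H=I(H\cap T)=I$, the fact that the abelian $N$ normalises each of its own subgroups, and the factorisation $G=G_pT$ --- and thereby deduce $H\cap N\unlhd G$ and close the argument by minimality. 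The technical crux is transferring the normaliser information from $G_p$ (which only moves $H\cap N$ trivially) to $T$, using only the weaker control that the $\Pi$-property of $I$ exerts on $I\cap N$ rather than on $H\cap N$ directly.
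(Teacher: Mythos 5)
The paper does not actually reprove this lemma (it is quoted from \cite[Proposition 2.7]{pi}), so your argument can only be assessed on its own terms. Your setup and your first case are correct: from $G=HT$ with $H$ a $p$-group and $T$ subnormal you rightly deduce that $|G:T|=|H:H\cap T|$ is a $p$-power and hence $O^p(G)\le T$; and when $N\le T$ the chain $H\cap N\le H\cap T\le I\le H$ gives $I\cap N=H\cap N$, so the $\Pi$-property of $I$ applied to the chief factor $N/1$ makes $|G:N_G(I\cap N)|$ a $p$-number, while $G_p\le N_G(H\cap N)$ makes it a $p'$-number; hence $H\cap N\unlhd G$ and minimality of $N$ finishes this case.

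The genuine gap is your second case ($N$ elementary abelian with $N\not\le T$), which you yourself flag as the crux and for which you offer only a plan rather than an argument. That plan does not go through as stated: for $t\in T$ one has $(H\cap N)^t=H^t\cap N$, and none of your three ingredients (the identity $IT\cap H=I$, the abelianness of $N$, the factorisation $G=G_pT$) gives any control over $H^t$ --- the modular-law identity constrains $I$, not $H\cap N$, which is exactly the obstruction you identified without resolving. Fortunately the case is degenerate, and you already hold the tool needed to see it: if $N\not\le T$ then $N\not\le O^p(G)$, so $N\cap O^p(G)=1$ by minimality, and $N\cong NO^p(G)/O^p(G)$ is a minimal normal subgroup of the $p$-group $G/O^p(G)$, hence central there and of order $p$. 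Then $H\cap N$ is trivially $1$ or $N$. Replacing your speculative second case by this observation turns your proposal into a complete and correct proof.
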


\begin{lemma}{\rm (\cite[Proposition 2.8]{pi})}\label{12}
Let $H$ be a $p$-subgroup of $G$ for some prime divisor $p$ of $|G|$
and $L$ a minimal normal subgroup of $G$. Assume that $H$ is
$\Pi$-normal in $G$. Then $L$ is a $p$-group if  $H\cap L\ne 1$.
\end{lemma}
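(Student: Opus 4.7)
The plan is to handle $L$ abelian and $L$ non-abelian separately; in the latter case, I use the subnormality of the $\Pi$-supplement $T$ to force $L\le T$ by a $p$-power index count, and then apply Lemma~\ref{2}.

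If $L$ is abelian, then $L$ is an elementary abelian $q$-group for some prime $q$, and $H\cap L$ is at once a $p$-subgroup and a $q$-subgroup. Nontriviality forces $q=p$, so $L$ is a $p$-group and we are done. Assume therefore that $L=S_1\times\cdots\times S_n$ is a direct product of isomorphic non-abelian simple groups and seek a contradiction. Pick a subnormal subgroup $T\le G$ and a subgroup $I$ with $\Pi$-property such that $G=HT$ and $H\cap T\le I\le H$.

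The first key step is to show $L\le T$. Since $H$ is a $p$-group, $[G:T]=[H:H\cap T]$ is a $p$-power. Using $L\lhd G$ and $G=HT$ one has $G=HLT$, so $[G:LT]=[H:H\cap LT]$ is also a $p$-power, hence $[LT:T]=|L|/|L\cap T|$ is a $p$-power. Now $L\cap T$ is subnormal in $L$ (obtained by intersecting a subnormal series for $T$ with $L$), and the subnormal subgroups of a direct product of non-abelian simple groups are precisely products of direct factors, so $L\cap T=\prod_{i\in J}S_i$ and $[L:L\cap T]=|S_1|^{\,n-|J|}$. Because $|S_1|$ is not a prime power, the index being a $p$-power forces $|J|=n$, i.e.\ $L\le T$.

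Granted $L\le T$, the chain $H\cap L\le H\cap T\le I\le H$ gives $H\cap L=I\cap L$. Applying the $\Pi$-property of $I$ to the chief factor $L/1$, the index $[G:N_G(H\cap L)]=[G:N_G(I\cap L)]$ is a $\pi(I\cap L)$-number, and since $I\le H$ is a $p$-group it is a $p$-power. The $G$-conjugacy class of $H\cap L$ then has $p$-power size, so any Sylow $p$-subgroup of $G$ has a fixed point on it; after conjugating, one obtains a Sylow $p$-subgroup $G_p$ of $G$ normalizing $H\cap L$. Because $(H\cap L)G_p$ is a $p$-group containing the Sylow subgroup $G_p$, it equals $G_p$, so $H\cap L\le G_p$ and $H\cap L\unlhd G_p$. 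Lemma~\ref{2} then yields $H\cap L=L$ or $1$; combined with $H\cap L\ne 1$, this gives $L\le H$, making $L$ a $p$-group, which contradicts the non-abelian simple direct factor decomposition. The main obstacle is the first key step above: combining the subnormal-subgroup structure of $\prod S_i$ with the $p$-power arithmetic of $G=HT$ to conclude $L\le T$. Once this is in hand, the reduction to Lemma~\ref{2} via the Sylow fixed-point trick is routine.
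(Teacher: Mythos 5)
The paper does not actually prove this lemma (it is quoted from \cite[Proposition 2.8]{pi}), so I can only judge your argument on its own terms. Your abelian case is fine, and your first key step is correct: subnormality of $T$, the $p$-power index count, and the fact that subnormal subgroups of a direct product of non-abelian simple groups are products of the factors do combine to give $L\le T$. (A shorter route, in the style this paper uses in steps (4) of both main proofs: a subnormal subgroup of $p$-power index contains $O^p(G)$, and $L\cap O^p(G)$ is $1$ or $L$ by minimality; in the first case $L\cong LO^p(G)/O^p(G)$ is already a $p$-group and you are done, in the second $L\le T$.) The identity $H\cap L=I\cap L$ and the application of the $\Pi$-property to the chief factor $L/1$ are also correct.

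The gap is the Sylow fixed-point step. From $|G:N_G(H\cap L)|=p^a$ you conclude that some Sylow $p$-subgroup of $G$ normalizes a conjugate of $H\cap L$. A $p$-group acting on a set of size $p^a$ with $a\ge 1$ need not have a fixed point (the number of fixed points is only congruent to $p^a\equiv 0\pmod p$, so it may be zero), and in your situation there provably is none: $G_p$ normalizes $(H\cap L)^g$ iff $G_p\le N_G(H\cap L)^g$, which is impossible since $|N_G(H\cap L)|_p=|G|_p/p^a<|G|_p$ when $a\ge 1$. So your argument works only when $a=0$, i.e.\ when $H\cap L\unlhd G$ --- but then minimality of $L$ gives $H\cap L=L$ directly and Lemma~\ref{2} is superfluous. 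The essential case $a\ge 1$, namely showing that a nontrivial $p$-subgroup of a non-abelian minimal normal subgroup cannot have normalizer of proper $p$-power index, is exactly the content of the lemma and is left unproved. More structurally, Lemma~\ref{2} is the version of this statement \emph{with} the extra hypothesis $H\cap N\unlhd G_p$, and the present lemma is precisely the version without it, so one should not expect to manufacture that hypothesis for free; a different finish (e.g.\ an argument with the normal closure of $H\cap L$ in $L$, using that $N_L(H\cap L)$ contains a Hall $p'$-part of $|L|$) is needed here.
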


\begin{lemma}\label{p-s}
    Let $p$ be an odd prime and  $L$ a normal subgroup of $G$ with $G/L$ is a $p$-group. Suppose $P$
    is a Sylow $p$-subgroup of $G$ and $N_G(P)$ is $p$-nilpotent. If every maximal subgroup  of $P$
   either has a $p$-nilpotent supplement in $G$ or satisfies $P_1\cap L=1$, then $G$ is $p$-nilpotent.
\end{lemma}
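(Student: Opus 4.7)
I plan to argue by minimal counterexample, taking $G$ of smallest order violating the conclusion, and aim to derive a contradiction via quotient reductions, an inductive step on $G/N$ for a minimal normal $N\le L$, and a final appeal to Lemma \ref{B-p}.

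First come the standard reductions. Clearly $L\ne 1$, else $G=G/L$ is a $p$-group. Then I reduce to $O_{p'}(G)=1$: if $K:=O_{p'}(G)\ne 1$, then $K\le L$ because $G/L$ is a $p$-group, and the hypothesis descends to $G/K$. Indeed $(G/K)/(L/K)\cong G/L$ remains a $p$-group; by the Frattini argument, $N_{G/K}(PK/K)=N_G(P)K/K$ is a quotient of $N_G(P)$ and hence $p$-nilpotent; and every maximal subgroup of $PK/K$ has the form $P_1K/K$ with $P_1$ maximal in $P$, inheriting either a $p$-nilpotent supplement $TK/K$ from $T$ in $G$, or (via Dedekind's modular law, using $K\le L$: $P_1K\cap L=K(P_1\cap L)=K$) the triviality $(P_1K/K)\cap(L/K)=1$. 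Minimality then gives $G/K$ $p$-nilpotent, hence $G$ too, a contradiction. Thus $O_{p'}(G)=1$, and consequently $P\cap L\ne 1$ (otherwise $L\le O_{p'}(G)=1$).

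Second, I pick a minimal normal subgroup $N$ of $G$ contained in $L$; since $O_{p'}(G)=1$, $p$ divides $|N|$. Assuming $N$ is elementary abelian $p$ (so $N\le P\cap L$), I plan to apply induction to $G/N$: every maximal subgroup of $P/N$ is $P_1/N$ for some $P_1\ge N$ maximal in $P$, and then $N\le P_1\cap L$ forces $P_1\cap L\ne 1$, so $P_1$ has a $p$-nilpotent supplement $T$ whose image $TN/N$ $p$-nilpotently supplements $P_1/N$ in $G/N$. The $N_G(P)$-hypothesis also descends. By minimality, $G/N$ is $p$-nilpotent with normal $p$-complement $M/N$, and Schur--Zassenhaus applied to $N\triangleleft M$ yields $M=N\rtimes K$ with $K$ a Hall $p'$-subgroup of $G$. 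A contradiction follows once I show $K\triangleleft G$.

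The main obstacle is precisely to force $K\triangleleft G$ (and, in parallel, to rule out the case where $N$ is a direct product of nonabelian simple groups). For the abelian case, Frattini applied to the $M$-conjugacy class of complements of $N$ gives $G=MN_G(K)$, so it suffices to show $N_G(K)=G$. I plan to use the $p$-nilpotency of $N_G(P)$ together with Lemma \ref{B-p}: after reducing to $N\le Z(P)$ by replacing $N$ with a minimal $G$-invariant subgroup of $N\cap Z(P)$ (nonempty because $N$ is a normal $p$-subgroup), the action of $G$ on $N$ factors through $G/L$, a $p$-group, while $G/C_G(N)\le\mathrm{Aut}(N)$ cuts out a $p'$-quotient, together forcing $K\subseteq C_G(N)$ and $K\triangleleft G$. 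The nonabelian simple case I expect to rule out by order estimates on $|N\cap P|$ combined with another appeal to Lemma \ref{B-p} (since then $N\cap P$ would give a Sylow $p$-subgroup of $N$ whose $G$-normalizer is controlled by the $p$-nilpotent $N_G(P)$). The centralization-and-covering argument yielding $K\triangleleft G$ is the technical heart of the proof.
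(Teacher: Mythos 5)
There is a genuine gap, and it sits exactly where the real difficulty of this lemma lies. Your plan rests on picking a minimal normal subgroup $N\le L$ and ``assuming $N$ is elementary abelian $p$''; the non-$p$-soluble possibility (where $N$ is a direct product of nonabelian simple groups) is deferred to ``order estimates on $|N\cap P|$ combined with another appeal to Lemma \ref{B-p}.'' That is not a workable plan: Lemma \ref{B-p} requires $N_G(\cdot)=C_G(\cdot)$ for a Sylow subgroup of a normal subgroup with $p$-nilpotent quotient, which is far stronger than anything available here, and nothing in your setup produces such an equality for $N\cap P$. The paper's proof spends most of its effort precisely on this point: it first shows every proper subgroup of $G$ containing $P$ is $p$-nilpotent, then invokes Thompson's normal $p$-complement criterion (valid only for odd $p$) on characteristic subgroups of $P$ to manufacture a normal subgroup $X\le P$ with $G/X$ $p$-nilpotent, and only then concludes $G$ is $p$-soluble so that minimal normal subgroups are $p$-groups. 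Tellingly, your proposal never uses the hypothesis that $p$ is odd in any essential way, whereas it is exactly what licenses the Thompson step; an argument that would work verbatim for $p=2$ should be treated with suspicion.

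A second, smaller defect: in your abelian endgame you assert that ``the action of $G$ on $N$ factors through $G/L$.'' This would require $L\le C_G(N)$, which is unjustified --- $N\le L$ and $L$ may act nontrivially on its own minimal normal subgroup (think of $L=G$ with $N$ self-centralizing). Without it, combining with $P\le C_G(N)$ does not force $N\le Z(G)$, and your route to $K\unlhd G$ collapses. The paper's endgame in the $p$-soluble case is different and does go through: it shows $R$ is the unique minimal normal subgroup with a $p$-nilpotent complement $T$, sets $Q$ equal to the $p$-complement of $T=N_G(Q)$, takes a maximal $P_1\ge P\cap T$, and uses the hypothesis to get a $p$-nilpotent supplement $U$ of $P_1$ which (after conjugating $Q$ into $U$) lands inside $T$, yielding the order contradiction $|G|\le|R\cap P_1||T|<|R||T|=|G|$; the residual case $|R|=p$ is killed by $R=C_G(R)$ and $G=N_G(P)$. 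You would need to import both the Thompson reduction and something like this counting argument to complete your sketch.
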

\begin{proof}
 If $O_{p'}(G)\ne 1$ then one can obtained by induction on $|G|$ that $G/O_{p'}(G)$ is $p$-nilpotent and so
 is $G$. Assume that $O_{p'}(G)=1$.
 Let $M$ be a proper subgroup of $G$ containing $P$. Then $M/M\cap L$ is a $p$-group and $N_M(P)=N_G(P)\cap M$
 is $p$-nilpotent. Let $P_1$ be a maximal subgroup of $P$ with $P_1\cap M\cap L\ne 1$. Then $P_1\cap L\ne 1$
 and so $P_1$ has a $p$-nilpotent supplement in $G$. It follows that $P_1$ has a $p$-nilpotent supplement in $M$.
 Thus the hypotheses hold on $M$ and hence we can have that $M$ is $p$-nilpotent by induction on $|G|$.

If for any characteristic subgroup $X$ of $P$, $N_G(X)$ is $p$-nilpotent, then $G$ is $p$-nilpotent by
\cite[Corollary]{Thomposon}. Assume that there is a characteristic subgroup $X$ of $P$ with $N_G(X)$ is
not $p$-nilpotent. Since $N_G(P)$ is $p$-nilpotent, we can choose $X$ to be with maximal order. Since
$P\subseteq N_G(X)$, if $N_G(X)<G$ then $N_G(X)$ is $p$-nilpotent by above argument. So $N_G(X)=G$
and $X\unlhd G$. The maximality of $X$ also implies that for any characteristic subgroup $Y/X$ of $P/X$,
$N_{G/X}(P/X)$ is $p$-nilpotent. Again by \cite[Corollary]{Thomposon}, $G/X$ is $p$-nilpotent. In particularly,
$G$ is $p$-soluble.

Let $R$ be a minimal normal subgroup of $G$. Then $R$ is a $p$-group since $G$ is $p$-soluble and $O_{p'}(G)=1$.
If $P_1/R$ is a maximal subgroup of $P/R$ with $P_1/R\cap LR/R\ne 1$ then $P_1\cap LR=(P_1\cap L)R\nsubseteq R$
and hence $P_1\cap L\ne 1$. Then, one can see that the hypotheses still hold on $G/R$ and by induction on $|G|$,
we have that $G/R$ is $p$-nilpotent. Since the class of all $p$-nilpotent group is a saturated formation,  $R$ is
the only minimal subgroup of $G$ and $R\cap \Phi(G)=1$. Thus there is a $p$-nilpotent complement $T$ of $R$ in $G$.
Let $Q$ be the normal $p$-complement of $T$. Then $Q\unlhd T$. If $Q\unlhd G$ then $Q\subseteq O_{p'}(G)=1$ and so
$G$ is a $p$-group. Assume that $Q$ is not normal in $G$. Then $T=N_G(Q)$. Let $P_1$ be a maximal subgroup of $P$
containing $P\cap T$. Since $R(P\cap T)=P\cap RT=P$, $P_1=P_1\cap R(P\cap T)=(R\cap P_1)(P\cap T)$ and $R\cap P_1<R$.
If $|R|\ne p$, then $R\cap P_1\ne 1$ and hence $P_1\cap L\ne 1$. Thus $P_1$ has a $p$-nilpotent supplement $U$ in $G$.
Since $G$ is $p$-soluble, we can assume that $Q\subseteq U$ and hence $U\subseteq N_G(Q)=T$. It follows that
$G=P_1U=P_1T=(R\cap P_1)T$. So $|G|=|(R\cap P_1)T|\le |(R\cap P_1)||T|<|R||T|=|G|$, a contradiction.
Assume that $R$ is of order $p$. Since $R$ is the only minimal normal subgroup of $G$ and
$R\nsubseteq \Phi(G)$, $R=C_G(R)$. Thus $G/R=G/C_G(R)$ is isomorphic to some subgroup
of Aut$(R)$ which is a group of order $p-1$. So $G/R$ is a $p'$-group and $R=P$.
But this implies that $G=N_G(P)$ is $p$-nilpotent and the lemma holds.

\end{proof}

\begin{lemma}\label{2-s}
    Let  $L$ be a normal subgroup of $G$ with $G/L$ is a $2$-group. Suppose $P$ is a Sylow
    $2$-subgroup of $G$. If every  maximal subgroup $P_1$ of $P$ is either has a $2$-nilpotent supplement in $G$
    or satisfies $P_1\cap L=1$, then $G$ is $2$-nilpotent.
\end{lemma}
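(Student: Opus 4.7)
The plan is to proceed by induction on $|G|$, following the template of the proof of Lemma~\ref{p-s} for odd primes while substituting the use of Thompson's normal $p$-complement theorem (unavailable at $p=2$) with the solvability of $2$-nilpotent groups afforded by the Feit-Thompson odd order theorem. Set $P_0 = P \cap L$, the Sylow $2$-subgroup of $L$. Since $G/L$ is a $2$-group one has $O_{2'}(G) \leq L$, and the hypothesis passes to $G/O_{2'}(G)$, so I may reduce to $O_{2'}(G) = 1$. If $|P_0| \leq 2$, then $|\mathrm{Aut}(P_0)| \leq 1$ forces $N_L(P_0) = C_L(P_0)$, so Burnside's transfer theorem makes $L$, and therefore $G$, $2$-nilpotent. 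Hence assume $|P_0| \geq 4$; then for every maximal subgroup $P_1$ of $P$, the intersection $P_1 \cap P_0$ has index at most $2$ in $P_0$ and is non-trivial, so $P_1 \cap L \neq 1$, and every maximal subgroup of $P$ has a $2$-nilpotent (in particular solvable) supplement in $G$.

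Next, following Lemma~\ref{p-s}, verify that every proper subgroup $M \supseteq P$ of $G$ is $2$-nilpotent: the hypothesis transfers to $M$ with $M \cap L$ in place of $L$ via Dedekind's modular law, which gives $M \cap T$ as a $2$-nilpotent supplement inside $M$ whenever $T$ is one in $G$. Take a minimal normal subgroup $R$ of $G$ with $R \leq L$ and verify that $G/R$ inherits the hypothesis, hence is $2$-nilpotent by induction. Saturation of the $2$-nilpotent formation forces $R$ to be the unique minimal normal subgroup of $G$ contained in $L$ and $R \not\leq \Phi(G)$, so $R$ admits a complement $T \cong G/R$ with $T$ $2$-nilpotent. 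The main obstacle is then to show $R$ is an elementary abelian $2$-group. Suppose toward a contradiction that $R = S^k$ with $S$ non-abelian simple; then $P \cap R$ is a non-normal Sylow $2$-subgroup of $R$, and the Frattini argument $G = R \cdot N_G(P \cap R)$, together with the $2$-nilpotency of the proper overgroup $N_G(P \cap R) \supseteq P$, forces the Sylow $2$-normalizer $N_R(P \cap R)$ to be $2$-nilpotent. Combining this with the structural information from the supplements (for any $P_1 \supseteq P \cap R$ maximal in $P$, the intersection $R \cap T'$ is a solvable subgroup of $R$ of $2$-power index, hence contains a Hall $2'$-subgroup of $R$), a structural argument on the simple group $S$ yields the desired contradiction; this is the most delicate step.

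Finally, with $R$ an elementary abelian $2$-group and complement $T$ $2$-nilpotent, the endgame mirrors Lemma~\ref{p-s} precisely. Setting $Q = O_{2'}(T)$: either $Q \lhd G$, forcing $Q \leq O_{2'}(G) = 1$, so $T$ and hence $G = RT$ is a $2$-group; or the irreducibility of $R$ as a $T$-module together with the commutator identity $[r,q] \in R \cap Q = 1$ for $r \in N_R(Q)$, $q \in Q$ gives $N_R(Q) = C_R(Q) \lhd G$, forcing $C_R(Q) = 1$ and $N_G(Q) = T$. In the latter case, choose $P_1$ maximal in $P$ with $P \cap T \leq P_1$; if $|R| \geq 4$ then $P_1 \cap R \neq 1$, so $P_1$ admits a $2$-nilpotent supplement $U$, and using Hall conjugacy in the solvable group $G = R \rtimes T$ to arrange $Q \leq U \leq N_G(Q) = T$, one obtains $G = P_1 T = (R \cap P_1)T$, contradicting $|(R \cap P_1)T| \leq |R \cap P_1| |T| < |R||T| = |G|$. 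If $|R| = 2$, then $R \leq Z(G)$ since $\mathrm{Aut}(R) = 1$, and Schur-Zassenhaus applied to the central extension given by the preimage in $G$ of the normal $2$-complement of $G/R$ yields a characteristic Hall $2'$-subgroup of $G$ lying in $O_{2'}(G) = 1$, forcing $G$ to be a $2$-group.
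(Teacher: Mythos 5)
Your outline transplants the induction/minimal-normal-subgroup architecture of Lemma \ref{p-s} to $p=2$, but it breaks down at exactly the point where that architecture needs a normal complement theorem: showing that the minimal normal subgroup $R$ is a $2$-group. For odd $p$ the paper obtains $p$-solubility from Thompson's normal $p$-complement theorem; at $p=2$ you replace this by ``a structural argument on the simple group $S$ yields the desired contradiction; this is the most delicate step'' --- and that argument is never given. Worse, the two facts you propose to feed into it do not suffice on their own: a non-abelian simple group can perfectly well have a $2$-nilpotent Sylow $2$-normalizer \emph{and} a soluble subgroup of $2$-power index (e.g.\ $PSL_2(7)$ has self-normalizing Sylow $2$-subgroups isomorphic to $D_8$, and the soluble point stabilizer $7{:}3$ of index $8$ in its action on the projective line). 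Ruling such configurations out would need genuinely new input --- order counting inside the specific $G$, or classification-strength results such as Guralnick's theorem on subgroups of prime-power index --- none of which appears in the sketch. So the proposal has a real hole at its central step; the reductions before it and the endgame for elementary abelian $R$ are fine but cannot be reached.

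The paper's own proof sidesteps all of this and is much shorter. Since $|P\cap L|>2$ forces every maximal subgroup of $P$ to have a $2$-nilpotent supplement, it picks such a supplement $T_1$ of maximal order, observes that its normal Hall $2'$-subgroup $Q$ is a Hall $2'$-subgroup of all of $G$ (because $|G:T_1|$ is a $2$-power), that $N_G(Q)$ is therefore $2$-nilpotent and equals $T_1$ by maximality, and then takes a maximal subgroup $P_2\supseteq P\cap T_1$ of $P$ with supplement $T_2$; Gross's conjugacy theorem for odd-order Hall subgroups places a conjugate of $Q$ inside $T_2$, forcing $T_2\le N_G(Q)=T_1$ and the order contradiction $G=P_2T_1=P_2Q<PQ=G$. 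No induction, no minimal normal subgroups, no analysis of simple groups. You should either adopt this Hall-subgroup argument or actually supply the missing case analysis for non-abelian $R$.
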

\begin{proof}
   If $P\cap L$ is of order 2 then $L$ is of order $2n$, where $n$ is an odd number, and hence is 2-nilpotent.
   Let $R$ be the normal Hall $2'$-subgroup of $L$. Then $G/R$ is a 2-group and hence $G$ is 2-nilpotent.
   Assume $|P\cap L|>2$. Let $P_1$ be any maximal subgroup of $P$. Then $P_1\cap L$ is maximal in $P\cap L$ or
   $P_1\cap L=P\cap L$. Thus $P_1\cap L\ne 1$. It follows that every maximal subgroup of $P$ has a 2-nilpotent
   supplement in $G$. Let $P_1$ be a maximal subgroup of $P$ and $T_1$ be a 2-nilpotent supplement of $P_1$ in $G$
   with maximal order. Let $Q$ be the Hall $2'$-subgroup of $T_1$. Then $Q$ is also a Hall $2'$-subgroup of $G$ and
   $Q\unlhd T_1$. Thus $N_G(Q)$ is 2-nilpotent and $T_1\subseteq N_G(Q)$. The maximality of $T_1$ shows that
   $T_1=N_G(Q)$. If $P\cap T_1=P$ then $G=T_1$ is 2-nilpotent. Assume that $P\cap T_1<P$ and let $P_2$ be
   a maximal subgroup of $P$ with $P\cap T_1\le P_2$. Then $P_2$ has a 2-nilpotent supplement  $T_2$ in $G$.
   Clearly, $T_2$ contains a Hall $2'$-subgroup of $G$. By \cite[Theorem A]{Gross},  $T_2$ contains
   some conjugate of $Q$ and without loss of generality, we can assume $Q\le T_2$. It follows that $Q$
   is also the normal Hall $2'$-subgroup of $T_2$ and so $T_2\subseteq N_G(Q)=T_1$. Thus $G=P_2T_2=P_2T_1=P_2Q<PQ=G$,
   a contradiction and the lemma holds.

\end{proof}

\begin{lemma}\label{min}
    Let $P$ be  a normal $p$-subgroup of $G$, where $p$ is a prime dividing $|G|$. If
    every subgroups of $P$ of order $p$ or 4(when $P$ is a nonabelian 2-group) not having
    $p$-nilpotent supplement in $G$ is $\Pi$-normal in $G$, then $P\subseteq Z_\infty^{\frak U}(G)$.
\end{lemma}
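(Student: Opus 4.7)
I argue by induction on $|G|+|P|$. Let $N$ be a minimal normal subgroup of $G$ with $N\subseteq P$; being contained in the normal $p$-subgroup $P$, $N$ is an elementary abelian $p$-group. The plan has two main steps: first show $|N|=p$, so that $N\subseteq Z_\infty^{\mathcal U}(G)$, then transfer the hypothesis to $(G/N,P/N)$ and apply induction to conclude $P/N\subseteq Z_\infty^{\mathcal U}(G/N)$, hence $P\subseteq Z_\infty^{\mathcal U}(G)$.

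To establish $|N|=p$, fix a Sylow $p$-subgroup $G_p$ of $G$ (so $P\leq G_p$) and pick $H\leq Z(G_p)\cap N$ of order $p$, which exists because $N$ is a nontrivial normal subgroup of the $p$-group $G_p$; then $H\lhd G_p$. By the hypothesis, $H$ is either $\Pi$-normal in $G$ or has a $p$-nilpotent supplement $T$ in $G$. If $H$ is $\Pi$-normal, Lemma~\ref{2} combined with $H\subseteq N$ yields $H\cap N=N$, so $|N|=|H|=p$. If $G=HT$ with $T$ $p$-nilpotent, abelianness of $N$ together with $H\subseteq N$ shows $N\cap T$ is normalized by $HT=G$, so by minimality of $N$ either $N\cap T=1$, which forces $|N|=|NT/T|\leq|G/T|\leq|H|=p$, or $N\subseteq T$; in the latter case, the normal $p$-complement $Q$ of $T$ satisfies $[N,Q]\leq N\cap Q=1$, so $Q\subseteq C_G(N)$ and $G/C_G(N)$ is a $p$-group acting on the elementary abelian $p$-group $N$. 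A nontrivial fixed subgroup exists by the standard fixed-point lemma for $p$-groups on $\mathbb F_p$-vector spaces; it is $G$-invariant, so minimality of $N$ forces $N\leq Z(G)$, whence $|N|=p$. (For $p=2$ with $P$ nonabelian, the same argument with an order-$2$ subgroup of $Z(G_2)\cap N$ still works.)

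Now $N\subseteq Z_\infty^{\mathcal U}(G)$. To apply induction to $(G/N,P/N)$ I must transfer the hypothesis: given $\bar H\leq P/N$ of order $p$ (or $4$, when $P/N$ is a nonabelian $2$-group) without $p$-nilpotent supplement in $G/N$, show $\bar H$ is $\Pi$-normal in $G/N$. Lift to $H\leq P$ with $N\leq H$. When the extension $1\to N\to H\to\bar H\to1$ splits, choose a complement $H_1$ of order $|\bar H|$; then $H_1$ inherits the absence of a $p$-nilpotent supplement in $G$ (since $G=H_1T$ with $T$ $p$-nilpotent would give $\bar G=\bar H\cdot TN/N$), so by hypothesis $H_1$ is $\Pi$-normal in $G$. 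The witness $I$ in the definition satisfies $I\in\{1,H_1\}$: if $I=H_1$ then $H_1$ has the $\Pi$-property and Lemma~\ref{1}(1) transfers it to $\bar H=H_1N/N$ in $G/N$, which is then $\Pi$-normal by Lemma~\ref{1}(2); if $I=1$, a subnormal supplement of $H_1$ with trivial intersection yields, via the modular law and minimality of $N$, a subnormal supplement of $\bar H$ in $G/N$ with intersection contained in the trivial $\Pi$-subgroup.

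The main obstacle is the nonsplit case: when $|\bar H|=p$ and $H$ is cyclic of order $p^2$, $N$ is the unique order-$p$ subgroup of $H$ and no complement $H_1$ exists, so the order-$p$ hypothesis in $G$ is not directly applicable. I would handle this by passing to a minimal counterexample $(G,P)$, using that every proper $G$-invariant subgroup $M$ of $P$ with $N\subseteq M$ satisfies $M/N\subseteq Z_\infty^{\mathcal U}(G/N)$ already (by minimality of $|P|$), together with the $\Pi$-property of the normal subgroup $N$ itself and Lemma~\ref{Fit} applied to the generalized Fitting structure, to force $P$ into a sufficiently constrained shape to derive a contradiction. The order-$4$ portion of the hypothesis, active only for $p=2$ with $P$ nonabelian, is precisely what handles the analogous nonsplit cyclic-of-order-$4$ case that otherwise obstructs the $p=2$ descent.
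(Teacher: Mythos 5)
Your Step 1 (that a minimal normal subgroup $N$ of $G$ inside $P$ has order $p$) is essentially sound: the case where an order-$p$ subgroup $H\le Z(G_p)\cap N$ is $\Pi$-normal is handled by Lemma~\ref{2}, and your analysis of the supplemented case (forcing $N\cap T\unlhd G$, then either $N\cap T=1$ or $Q\le C_G(N)$ and $N\le Z(G)$) goes through. The problem is that the lemma is not proved by this quotient induction, and the place where your argument breaks is exactly the one you flag: to transfer the hypothesis to $G/N$ you must handle an order-$p$ subgroup $\bar H=H/N$ whose preimage $H$ is cyclic of order $p^2$. There the only order-$p$ subgroup of $H$ is $N$ itself, the hypothesis on order-$p$ subgroups of $P$ says nothing about $\bar H$, and your proposed remedy (``pass to a minimal counterexample \dots to force $P$ into a sufficiently constrained shape'') is not an argument; no contradiction is actually derived. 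This is a genuine gap, not a routine verification, because the inductive statement you are trying to push to $G/N$ is simply not inherited in the nonsplit case.

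The paper's proof is organized precisely to avoid this obstruction. It takes a minimal $G$-invariant $L\le P$ not lying in $Z_\infty(R)$ (with $R$ the $\mathfrak{A}_{p-1}$-residual), fixes a chief factor $L/K$ which must be noncyclic, and first proves $K\le Z(L)$ via the centralizer $C=C_R(K)\supseteq O^p(R)$. With $K$ central in $L$ and $p$ odd (or $P$ abelian), the identity $(ab)^p=a^pb^p[b,a]^{p(p-1)/2}$ makes $\Omega=\{a\in L\mid a^p=1\}$ a subgroup, and the key step is that $\Omega\not\subseteq K$: otherwise every $p'$-element of $C$ acting trivially on $K$ acts trivially on all order-$p$ elements of $L$, hence on $L$ by \cite[IV, Satz 5.12]{Huppert-BOOK-I}, forcing $L/K$ to be $R$-central. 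This guarantees an element of order exactly $p$ in $L\setminus K$ whose image is central in $G_p/K$, i.e., it manufactures the ``split'' situation that your induction cannot assume; the hypothesis is then applied to $\langle a\rangle$ directly and Lemmas~\ref{1} and~\ref{2} finish the argument. (The order-$4$ clause plays the analogous role for nonabelian $2$-groups.) Without some substitute for this $\Omega$/Satz 5.12 step, your outline cannot be completed.
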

\begin{proof}
Let $R$ be the $\frak{A}_{p-1}$-residual of $N$, where $\frak{A}_{p-1}$ is the class of all abelian
group of exponent dividing $p-1$. Then by Lemma \cite[Lemma 2.15]{pi}, $P\subseteq Z_\infty^{\frak U}(G)$
if and only if $P\subseteq Z_\infty(R)$. Hence
    if the lemma is not true then the set $\Gamma=\{L\le P\mid L\unlhd G, L\nsubseteq Z_\infty(R)\}$ is
     non-empty.   Choose $L$ to be an element in the set of minimal order and let $L/K$ be a chief factor of $G$.
     Then $K\subseteq Z_\infty(R)$ and $L/K$ is not central in $R$. Since $G/C_G(L/K)$
     is isomorphic to some subgroup of Aut$(L/K)$, if $L/K$ is cyclic then $G/C_G(L/K)$ is abelian of exponent
     dividing $p-1$ and it follows that $C_G(L/K)\subseteq R$.  This implies that $L\subseteq Z_\infty(R)$, a
     contradiction. So $L/K$ is noncyclic.
Let $C=C_R(K)$. Then $O^p(R)\subseteq C$ by \cite[A,(12.3)]{D-H-BOOK}. Clearly, $C$ is normal in $G$. Since $L/K$
 is a $G$-chief factor and $L\cap KC=(L\cap C)K\unlhd G$. $(L\cap C)K=K$ or $L$. If $(L\cap C)K=K$ then
 $L\cap C\subseteq K$ and hence $[L, C]\le K$. Thus $O^p(G)\le C\le C_R(L/K)$ and so $R/C_R(L/K)$ is a $p$-number.
 Then by \cite[Lemma 1.7.11]{GUO-BOOK}, we see that every $R$-chief factor between $L$ and $K$ is $R$-central,
  so $L\subseteq Z_\infty(R)$ since $K\subseteq Z_\infty(R)$,
  a contradiction. Assume that $(L\cap C)K=L$. Then $(L\cap C)/(K\cap C)=(L\cap C)/(L\cap C\cap K)\cong (L\cap C)K/K=L/K$
  is a noncyclic $G$-chief factor. Thus $L\cap C\in \Gamma$. The minimality of $L$ shows that  $L\subseteq C$ and
  hence $K\subseteq Z(L)$.

Let $a$, $b$ be elements of order $p$ in $L$. Suppose $p > 2$ or $P$
is abelian. Then $(ab)^p = a^pb^p[b, a]^{\frac{p(p-1)}{2}} = 1$.
Hence the product of  elements of order $p$ is  of order $p$ or 1
and so $\Omega=\{a\in L\mid a^p=1\}$ is a subgroup of $L$. If
$\Omega\subseteq K$, then all elements of $C$ with $p'$-order act
trivially on every element of $L$ of order $p$ since they act
trivially on $K$. It follows from \cite[IV, Satz
5.12]{Huppert-BOOK-I} that all elements in $C$ of $p'$-order act
trivially on $L$. Thus $O^p(C)=O^p(R)\subseteq C_R(L/K)$ and, as
above argument, $L\subseteq Z_\infty(R)$, a contradiction. If
$\Omega \nsubseteq K$, then $L=\Omega K$.

Choose an element $a$ in $\Omega\setminus K$ such that $\langle
a\rangle K/K\subseteq L/K\cap Z(G_p/K)$. Let $H=\langle a\rangle$.
If $H$ has a $p$-nilpotent supplement $U$ in $G$, then $HK/K$ has a
$p$-nilpotent supplement $UK/K$ in $G/K$. Thus
$G/K=(HK/K)(UK/K)=(L/K)(UK/K)$. Since $L/K$ is minimal normal in
$G/K$ and is abelian, $L/K\cap UK/K=1$ or  $L/K\subseteq UK/K$ and
$UK/K=G/K$. If $L/K\cap UK/K=1$, then $|L/K|=|G/K:UK/K|=
|HUK/K:UK/K|\le |H|=p$. It follows that $L/K$ is cyclic of order
$p$, which contradicts to the choice of $L/K$. If $L/K\subseteq
UK/K=G/K$, then $L/K$ is  cyclic since $L/K$ is minimal normal in
$G/K$ and $G/K=UK/K\cong U/U\cap K$ is $p$-nilpotent. Hence $H$ has
no $p$-nilpotent supplement in $G$. Since $a$ is of order $p$, by
the hypotheses, $H$ is $\Pi$-normal in $G$ and so there is a
subnormal subgroup $T$ of $G$ such that $G=HT$ and $H\cap T\le I\le
H$, where $I$ has $\Pi$-property in $G$. Since $H$ is of prime
order, $H\cap T=H$ or 1. If $H\cap T=H$, then $H=I$ has
$\Pi$-property in $G$. By Proposition \ref{1} (1), $HK/K$ has
$\Pi$-property in $G/K$ and hence is $\Pi$-normal in $G/K$. It
follows from Lemma \ref{2} that $L/K=HK/K\cap L/K=HK/K$ is cyclic, a
contradiction. Assume that $H\cap T=1$. Then $|G:T|=p$ and, since
$T$ is subnormal in $G$, we see  that  $T$ is normal in $G$. Clearly
$G=LT$ and therefore $L/L\cap T\cong G/T$ is cyclic of order $p$.
Thus $G/C_G(L/L\cap T)$ is a group of exponent dividing $p-1$. It
follows that $L/L\cap T $ is a central factor of $R$. Since $|L\cap
T|<|L|$, $L\cap T\subseteq Z_\infty(R)$ by the choice of $L$. This
induce that $L\subseteq Z_\infty(R)$, a contradiction. This
contradiction shows that $\Gamma$ is empty and the lemma holds.

\end{proof}

\begin{lemma}\label{p-s1}
  Let $p$ be an odd prime and $P$ a Sylow $p$-subgroup of $G$. Assume that $N_G(P)$ is $p$-nilpotent. If
  any minimal subgroup of $P\cap O^p(G)$ either is contained in $Z_\infty(G)$ or has a $p$-nilpotent supplements in $G$.
   Then $G$ is $p$-nilpotent.
\end{lemma}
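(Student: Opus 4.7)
The plan is a minimal-counterexample argument along the lines of Lemma \ref{p-s}, but adapted from maximal subgroups of $P$ to minimal subgroups of $P\cap O^p(G)$. Let $G$ be a counterexample of smallest order. First I would reduce to $O_{p'}(G)=1$: the dichotomy ``$\langle a\rangle\subseteq Z_\infty(G)$ or $\langle a\rangle$ has a $p$-nilpotent supplement'' passes to $G/O_{p'}(G)$ via Lemma \ref{1}(3), the inclusion $Z_\infty(G)O_{p'}(G)/O_{p'}(G)\subseteq Z_\infty(G/O_{p'}(G))$, and the observation that $UO_{p'}(G)/O_{p'}(G)$ is still $p$-nilpotent and supplements $\langle a\rangle O_{p'}(G)/O_{p'}(G)$; induction then kills the $O_{p'}$-quotient.

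Second, I would verify that every proper overgroup $M$ of $P$ in $G$ is $p$-nilpotent. The nontrivial inputs are: $N_M(P)\le N_G(P)$ is $p$-nilpotent; $O^p(M)\subseteq O^p(G)$ because $M/(M\cap O^p(G))$ embeds in the $p$-group $G/O^p(G)$; $Z_\infty(G)\cap M\subseteq Z_\infty(M)$ by an upper-central-series induction; and Dedekind's law converts a $p$-nilpotent supplement $U$ of $\langle a\rangle$ in $G$ into the $p$-nilpotent supplement $M\cap U$ of $\langle a\rangle$ in $M$ whenever $\langle a\rangle\subseteq M$. Combined with the fact that $N_G(P)$ is $p$-nilpotent, this lets me run the Thompson/Glauberman reduction of Lemma \ref{p-s} verbatim: either $G$ is $p$-nilpotent at once, or there is a characteristic subgroup $X$ of $P$ with $X\unlhd G$ and $G/X$ $p$-nilpotent. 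In particular $G$ is $p$-soluble.

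Let $R$ be a minimal normal subgroup of $G$. By $p$-solubility and $O_{p'}(G)=1$, $R$ is elementary abelian of exponent $p$, and one checks $R\subseteq O^p(G)$ (otherwise $[R,O^p(G)]=1$ and the $p$-group $G/C_G(R)$ acts on $R$, forcing $R\subseteq Z(G)$ by a fixed-point argument which merges with the central case below). Pick $a\in R$ of order $p$; then $\langle a\rangle$ is a minimal subgroup of $P\cap O^p(G)$, so the hypothesis gives one of two alternatives. I claim the ``$p$-nilpotent supplement'' alternative is impossible: if $G=\langle a\rangle U$ with $U$ $p$-nilpotent and $\langle a\rangle\not\subseteq U$, then $|G:U|=p$; when $|R|>p$, the abelianness of $R$ forces $R\cap U\unlhd G$, contradicting minimal normality; when $|R|=p$, a short argument using $C_G(R)\supseteq P$, step 2, and $O_{p'}(G)=1$ pushes $C_G(R)=G$, i.e.\ $R\subseteq Z(G)$, and then $G=\langle a\rangle\times U$ is $p$-nilpotent. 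Hence $\langle a\rangle\subseteq Z_\infty(G)$, so $R=\langle a\rangle^G\subseteq Z_\infty(G)$; minimal normality of $R$ promotes this to $R\subseteq Z(G)$. Since $p$-nilpotency is a saturated formation and $R$ is central, it suffices to show $G/R$ is $p$-nilpotent, which I would obtain by induction.

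The delicate point will be transferring the hypothesis through the central quotient $G/R$ in the last step: a minimal subgroup $\langle\bar x\rangle$ of $(P/R)\cap O^p(G/R)$ need not lift to a minimal subgroup of $P\cap O^p(G)$, since an $x\in P\cap O^p(G)$ with $xR$ of order $p$ may have order $p^2$ in $G$, in which case (because $R$ is elementary abelian and central, so $(xr)^p=x^p$ for all $r\in R$) no element of the coset $xR$ has order $p$. The obstructive case is $\langle x\rangle\cong C_{p^2}$ with $\langle x^p\rangle=R$ of order $p$; resolving it---most likely by arguing directly that such an $\langle\bar x\rangle$ already lies in $Z_\infty(G/R)$ using that $\langle x^p\rangle\subseteq R$ is central in $G$---is the technical heart of the argument.
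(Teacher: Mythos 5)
Your first two steps track the paper's proof exactly: proper overgroups of $P$ inherit the hypotheses (via $O^p(M)\subseteq O^p(G)$, $Z_\infty(G)\cap M\subseteq Z_\infty(M)$, and restriction of supplements) and are $p$-nilpotent by minimality, and the Thompson-type argument from Lemma~\ref{p-s} then yields that $G$ is $p$-soluble with $O_{p'}(G)=1$. But from that point you diverge onto a route that you yourself do not close. The induction through $G/R$ for a central minimal normal subgroup $R$ requires transferring the hypothesis to the quotient, and the obstruction you identify is genuine, not merely technical: a minimal subgroup of $(P/R)\cap O^p(G/R)$ generated by $xR$ with $\langle x\rangle\cong C_{p^2}$ and $\langle x^p\rangle=R$ is governed by no part of the hypothesis on $G$ (which speaks only of subgroups of order $p$), and there is no reason for $\langle xR\rangle$ to lie in $Z_\infty(G/R)$ or to acquire a $p$-nilpotent supplement. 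Your proposed fix (``arguing directly that such an $\langle\bar x\rangle$ already lies in $Z_\infty(G/R)$'') is an unsupported hope, so the proof is incomplete exactly at what you call its technical heart.

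The paper avoids this trap entirely. After establishing $p$-solubility and $O_{p'}(G)=1$, it does not pass to a quotient. Instead it observes (via \cite[Proposition~2.3]{pi}) that a subgroup contained in $Z_\infty(G)$ automatically has $\Pi$-property, so every minimal subgroup of the normal $p$-subgroup $O_p(O^p(G))$ either has a $p$-nilpotent supplement or is $\Pi$-normal in $G$; Lemma~\ref{min} then gives $O_p(O^p(G))\subseteq Z_\infty^{\frak U}(G)$. Since $O_{p'}(O^p(G))=1$ forces $O_p(O^p(G))=F^*(O^p(G))$, Lemma~\ref{Fit} upgrades this to $O^p(G)\subseteq Z_\infty^{\frak U}(G)$, whence $G$ is $p$-supersoluble, hence $p$-closed (using $O_{p'}(G)=1$), and $G=N_G(P)$ is $p$-nilpotent. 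If you want to salvage your own approach, you would need either to strengthen the inductive statement so that it survives central quotients, or to replace the induction on $G/R$ with the hypercenter/generalized-Fitting argument above; as written, the argument does not constitute a proof.
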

\begin{proof}
  Let $M$ be a proper subgroup of $G$ containing $P$. Then $O^p(M)=\langle x\in M\mid |x|$ is not divided by
  $p\rangle\subseteq \langle x\in G\mid |x|$ is not divided by $p\rangle=O^p(G)$. So any minimal subgroup of
  $P\cap O^p(M)$ either is contained in $Z_\infty(G)\cap M\subseteq Z_\infty(M)$ or has a $p$-nilpotent supplements
  in $G$ and so is in $M$. Since $N_M(P)=M\cap N_G(P)$ is $p$-nilpotent, the hypotheses hold on $M$ and $M$ is
  $p$-nilpotent by induction on $|G|$.
  Hence, as argument in Lemma \ref{p-s}, we have that $G$ is $p$-soluble and $O_{p'}(G)=1$.

By \cite[Proposition 2.3]{pi}, if a subgroup is contained in the hypercentral of $G$ then it has $\Pi$-property
in $G$. So $O_p(O^p(G))\subseteq Z_\infty ^{\frak U}(G)$ by Lemma \ref{min}. Since $O_{p'}(O^p(G))\le O_{p'}(G)=1$,
 $O_p(O^p(G))=F((O^p(G)))=F_p((O^p(G)))=F^*((O^p(G))$. By Lemma \ref{Fit}, $O^p(G)\subseteq Z_\infty ^{\frak U}(G)$
 and hence $G$ is $p$-supersoluble. Again by $O_{p'}(G)=1$, we have that $G$ is $p$-closed and $G=N_G(P)$ is
 $p$-nilpotent. The lemma holds.

\end{proof}

\section{Proofs of Theorems A and B}

 \noindent{\bf Proof of Theorem A} Assume that the theorem is not true and let $G$ be a counter example of minimal
 order. We divide the
proof into several steps.

(1) {it $O_{p'}(G)=1$.

If $O_{p'}(G)\ne 1$, then $PO_{p'}(G)/O_{p'}(G)$ is a Sylow
$p$-subgroup of $G/O_{p'}(G)$. For any subgroup $M/O_{p'}(G)$ of
$PO_{p'}(G)/O_{p'}(G)$ of order $p^m$, let $H=P\cap M$. Then
$M=HO_{p'}(G)$ and $H$ is of order $p^m$. If $H$ has a $p$-nilpotent
supplement $T$ in $G$, then $M/O_{p'}(G)$ has a $p$-nilpotent
supplement $TO_{p'}(G)/O_{p'}(G)$ in $G/O_{p'}(G)$, so if
$M/O_{p'}(G)$ has no $p$-nilpotent supplement in $G/O_{p'}(G)$ then
$H$ has no $p$-nilpotent supplement in $G$ and hence is $\Pi$-normal
in $G$ by the hypotheses. Therefore,
 by Lemma \ref{1}, we see that $M/O_{p'}(G)$ is $\Pi$-normal in $G/O_{p'}(G)$. Clearly,
 $N_{G/O_{p'}(G)}(PO_{p'}(G)/O_{p'}(G))=N_{G}(P)O_{p'}(G)/O_{p'}(G)$ is still $p$-nilpotent.
 It follows that the hypotheses  hold on
$G/O_{p'}(G)$. Hence $G/O_{p'}(G)$ is
$p$-nilpotent by the choice of $G$. It follows that $G$ is
$p$-nilpotent, a contradiction. So $O_{p'}(G)=1$.

(2) {\it Let $L$ be a minimal normal subgroup of $G$. Then $L$ is an abelian $p$-group. }

If $L$ does not contained in $N$ then $L\cong LN/N$ is a minimal normal subgroup of a $p$-nilpotent
group $G/N$. Thus $L$ is either an abelian $p$-group or a $p'$-group. But if $L$ is a $p'$-group then
$L\subseteq O_{p'}(G)=1$, a contradiction. Hence $L$ is abelian. Assume that $L\subseteq N$.  Since
$O_{p'}(G)=1$, $L$ is a $pd$-subgroup. Let $H$ be any subgroup of $P$ of order $p^m$ with $H\cap L\ne 1$.
 If $H$ has no $p$-nilpotent supplement in $G$, then $H$ is $\Pi$-normal in $G$. It follows from
 Lemma \ref{12} that $L$ is an abelian $p$-group. Assume that  any such subgroup $H$ has a $p$-nilpotent supplement
 in $G$ then so does in $LP$. Let $P_1$ be any maximal subgroup of $P$ with $P_1\cap L\ne 1$. Then $P_1$
 must contain some subgroup $H$ of order
$p^m$ and $H\cap L\ne 1$ and so $P_1$ has a supplement $p$-nilpotent in $LP$. Applying Lemma \ref{p-s},
we see that $LP$ is $p$-nilpotent and so is $L$. Since $O_{p'}(L)\subseteq O_{p'}(G)=1$, $L$ is an
abelian $p$-group and  (2) holds.

In the following, $L$ is always a minimal normal subgroup of $G$ contained in $N$.

(3) $|L|=p^m$

Let $G_p$ be a Sylow $p$-subgroup of $G$ containing $P$. If $|L|>
p^m$, then $L$ has a proper subgroup $H$ of order $p^m$ with
$H\unlhd G_p$. If $H$ is $\Pi$-normal in $G$, then by Lemma \ref{2},
$H=L$ or 1. This is nonsense by the choice of $H$. If $H$ has a
$p$-nilpotent supplement $T$ in $G$. Then $G=HT=LT$ and since $L$ is
minimal normal in $G$, $L\subseteq T$ or $L\cap T=1$. If $L\cap T=1$
then $|G|=|HT|\le |H||T|<|L||T|=|LT|=|G|$, a contradiction. Thus
$L\subseteq T$ and so $G=LT=T$ is $p$-nilpotent. This is contrary to
the choice of $G$. Therefore, $|L|\le p^m$.

Assume that
 $|L|\ne p^m$ then $|L|< p^m$. Let $|L|=p^l$. By a same argument as in (1), we can obtain that every subgroup
 of order $ p^{m-l}$ of $P/L$ having no $p$-nilpotent supplement in $G$ is $\Pi$-normal in $G$ and, clearly,
 $N_{G/L}(P/L)$ is $p$-nilpotent. Thus $G/L$ is $p$-nilpotent by the choice of $G$.
Since the class of all $p$-nilpotent subgroup is a saturated formation, we see that $L$ is the only minimal
 normal subgroup of $G$ contained in $N$ and $\Phi(G)\cap N=1$. Thus there is a maximal subgroup $M$ of $G$
  such that $G=L\rtimes M$ and $M\cong G/L$ is $p$-nilpotent. Since $\Phi(N)\subseteq \Phi(G)\cap N$, $\Phi(N)=1$
   and so $F(N)$ is abelian. It follows that $F(N)\cap M\unlhd \langle F(N), M\rangle= G$. But $L$ is the only
   minimal normal subgroup of $G$ contained in $N$ and $L\cap M=1$, so $F(N)\cap M=1$.
   Hence $F(N)=F(N)\cap LM=L(F(N)\cap M)=L$ and therefore, $L=O_p(N)=F(N)=C_N(L)$.

Since $G_p=G_p\cap LM=L\rtimes(G_p\cap M)$ is a Sylow $p$-subgroup
of $G$ containing $P$, $G_p\cap M$ is a Sylow $p$-subgroup of $M$.
Let $L_1$ be a maximal subgroup of $L$ with $L_1\unlhd G_p$ and
$H_1$ be a subgroup of $G_p\cap M\cap P$ of order $p^{m+1}/|L|$.
Then $H=L_1H_1$ is a subgroup of $P$ of
 order $p^m$.  If $H$ is $\Pi$-normal in $G$, then $L_1=H\cap L=L$ or 1 by Lemma \ref{2}. But $L_1$
  is proper in $L$, so $L_1=1$ and $L$ is of order $p$. It follows that $N/L=N/C_N(L)$ is isomorphic
  to some subgroup of Aut($L$) which is a group of order $p-1$, so $N/L$ is a $p'$-group. Hence $L=P$
  is the Sylow $p$-subgroup of $N$ and $G=N_G(L)$ is $p$-nilpotent, contrary to the choice of $G$.
  Thus by hypotheses, $H$ has a $p$-nilpotent supplement $U$ in $G$. Let $Q$ be a Hall $p'$-subgroup of $U$.
  Then $Q$ is also a $p'$-subgroup of $G$. Since $G=LM$ is $p$-soluble, we can assume that $Q$ is also a Hall
   $p'$-subgroup of $M$. Since $M$ is $p$-nilpotent, $M\subseteq N_G(Q)$. If $N_G(Q)=G$ then $Q\subseteq O_{p'}(G)=1$
   by (1) and so $G$ is a $p$-group, a contradiction. Thus, $M=N_G(Q)$ by the maximality of $M$. The $p$-nilpotency
    of $U$ implies that $U\le N_G(Q)$, so $U\le M$. Hence $G=HU=HM=L_1H_1M=L_1M$ and $|G|\le |L_1||M|<|L||M|=|LM|=|G|$,
     a contradiction. This contradiction shows that (3) is true.

(4) $m=1$

Since, by (3), $|L|=p^m$, we need only to prove that $L$ is of order
$p$. Assume that $L$ is noncyclic. We claim that all minimal
subgroup of $N/L$ of order $p$ having no $p$-nilpotent supplement in
$G$ is $\Pi$-normal in $G$. Assume $A/L$ is of order $p$ and
$A/L\subseteq N/L$. Clearly, $A$ is noncyclic since $L$ is
noncyclic. Thus there is a maximal subgroup $H$ of $A$ different
from $L$. Therefore, $A/L=HL/L$ and $|H|=|L|=p^m$. If $A/L$ has no
$p$-nilpotent supplement in $G/L$, then $H$ has no $p$-nilpotent
supplement in $G$. Hence, by hypotheses, $H$ is $\Pi$-normal in $G$.
Choose $T$ to be a subnormal subgroup of $G$ with $G=HT$ and $H\cap
T\le I$, where $I$ is a subgroup of $H$ having $\Pi$-property in
$G$. Since $|G:T|$ is a $p$-number, $O^p(G)\subseteq T$. If
$L\nsubseteq T$, then $L\nsubseteq O^p(G)$. It follows that $L\cong
LO^p(G)/O^p(G)$ is a cyclic chief factor of $G$. This is nonsense
because $L$ is noncyclic. Thus $L\subseteq T$. It follows that
$G/L=(A/L)(T/L)$ and $A/L\cap T/L=HL/L\cap T/L=(H\cap T)L/L\le
IL/L$. Since, By Lemma \ref{1}, $IL/L$ has $\Pi$-property in $G/L$,
$A/L$ is $\Pi$-normal in $G$ and our claim holds. It is easy to see
that $N_{G/L}(P/L)=N_G(P)/L$ is $p$-nilpotent. If $P/L$ is cyclic of
order $p$, then $N_{N/L}(P/L)=C_{N/L}(P/L)$ since
$N_{N/L}(P/L)=N/L\cap N_{G/L}(P/L)$ is $p$-nilpotent. It follows
from Lemma \ref{B-p} that $G/L$ is $p$-nilpotent. If $P/L$ is
noncyclic, then the hypotheses hold on $G/L$. Thereby, $G/L$ is
$p$-nilpotent by the choice of $G$. Now,  by a similar argument as
in (3), one can prove that $G$ is $p$-nilpotent. This contradicts to
the choice of $G$ and hence $L$ is cyclic of order $p$.

(5) $O_p(N)\subseteq Z_\infty ^{\frak U}(G)$.

By (2). $O_p(N)\ne 1$. If $|O_p(N)|=p$, then (5) is clear true. If $|O_p(N)|\ne p$,
then (5) holds from (4) and Lemma \ref{min}.

(6) {\it $N$ is not $p$-soluble.}

Assume that $N$ is $p$-soluble. Then since $O_{p'}(N)\subseteq O_{p'}(G)=1$,
$F^*(N)=F(N)=F_p(N)=O_p(N)\subseteq Z_\infty ^{\frak U}(G)$. By Lemma \ref{Fit},
$N$ is supersoluble. Again by $O_{p'}(N)=1$, we have that $P$ is normal in $N$ and
hence  is normal in $G$ for $P$ is the Sylow $p$-subgroup of $N$. It follows that $G=N_G(P)$ is $p$-nilpotent.

(7) {\it The final contradiction}

Let $R$ be the $\frak{A}_{p-1}$-residual of $N$, where $\frak{A}_{p-1}$ is
the class of all abelian group of exponent dividing $p-1$. Then $R$ is not
 $p$-soluble since $N$ is not. For any $N$-chief factor
$U/V$ with $U\subseteq O_p(N)$, $|U/V|=p$ by (5). Hence $N/C_N(U/V)$ is an
abelian group of order dividing $p-1$ and so $C_N(U/V)\subseteq R$. It
follows that $O_p(N)\subseteq Z_\infty (R)$. Let $O=O_p(N)\cap O^p(N)$. Then
$O\subseteq Z_\infty (R)$. Since $R$ is not $p$-soluble, $O<R\cap O^p(N)$.
Let $Q/O$ be a minimal normal subgroup of $G/O$ with $Q\subseteq R\cap O^p(N)$.
 Then $Q$ is not $p$-soluble. Otherwise, the minimality of $Q/O$ implies $Q/O$
 is a $p'$-group or a $p$-group. It follows that $Q$ is $p$-nilpotent since
  $O\subseteq Z_\infty(R)\cap Q\subseteq Z_\infty (Q)$ and so $Q\subseteq F_p(N)=F(N)=O_p(N)$,
   a contradiction. Let $H$ be any subgroup of order $p$ contained in $Q$. If $H$ has no $p$-nilpotent
   supplement in $G$ then $H$ is $\Pi$-normal in $G$ by the hypotheses and (4). Let $T$ be any subnormal supplement
    of $H$ in $G$. Then $O\subseteq O^p(G)\subseteq T$. So $G/O=(HO/O)(T/O)$ and $(HO/O)\cap (T/O)=(H\cap T)O/O$.
     Now, assume that $H\cap T\le I\le H$ with $I$ has $\Pi$-property in $G$. Then $(H\cap T)O/O\le IO/O\le HO/O$
     and, by Lemma \ref{1},
$IO/O$ has $\Pi$-property in $G$. Therefore, $HO/O$ is $\Pi$-normal in $G$.
If $H$ is not contained in $O$, then $(HO/O)\cap (Q/O)\ne 1$ and so $Q/O$ is abelian by Lemma \ref{12},
 a contradiction. Thereby, we have that for any subgroup $H$ of $Q$ of order $p$, if $H$ is not contained in $O$,
  $H$ has a $p$-nilpotent supplement in $G$. Let $X=QP$.
  Then $X\subseteq R$ and it follows from  $O\subseteq Z_\infty (R)$
  that $O\subseteq Z_\infty (X)$. Since $X/Q$ is a $p$-group, $O^p(X)=O^p(Q)\le Q$.
  Thus, by above, every minimal subgroup of order $p$ of $P\cap O^p(X)$ either
  is contained in $Z_\infty(X)$ or has a $p$-nilpotent supplement in $X$ and
  therefore, $X=QP$ is $p$-nilpotent by Lemma \ref{p-s1}, so $Q$ is $p$-soluble,
  a contradiction. This is the finial contradiction and the theorem holds.

 \noindent{\bf Proof of Theorem B}
We shall prove the theorem by induction on the order of $G$. By a similar argument as in proof of Theorem A,
$G/O_{p'}(G)$ satisfies the hypotheses of the theorem. Hence, if $O_{p'}(G)\ne 1$ then $G/O_{p'}(G)$ is
$p$-nilpotent by induction and so is $G$. Now assume that $O_{p'}(G)=1$. Then

(1) {\it Let $L$ be a minimal normal subgroup of $G$. Then $L$ is an abelian $p$-group.}

If $L\nsubseteq N$ then $L\cong LN/N\unlhd G/N$.  But $G/N$ is $p$-nilpotent, and hence $L$ is.
 Since $O_{p'}(G)=1$, $L$ is a $p$-group and so is abelian. Assume $L\subseteq N$.
 If $P$ possesses a subgroup $H$ of order $p^m$ with $H\cap L\ne 1$ and $H$ is $\Pi$-normal in $G$.
 Then by Lemma \ref{12} $L$ is a $p$-group and is abelian. Assume that every subgroup $H$ of $P$ of
  order $p^m$ with $H\cap L\ne 1$ has a $p$-nilpotent supplement in $G$. Let $P_1$ be any maximal
   subgroup of $P$ with $P_1\cap L\ne 1$. Then $P_1$ must contain a subgroup $H$ of order $p^m$ with $H\cap L\ne 1$.
   It follows that $P_1$ has a $p$-nilpotent supplement in $G$ and so does in $LP$.  Therefore,
    if $p=2$ then $LP$ is $p$-nilpotent by Lemma \ref{2-s} and so is $L$. It follows from  $O_{p'}(G)=1$
    that $L$ is a $p$-group and is abelian. Assume $p>2$. Then $G$ is of odd order since $(|G|,p-1)=1$.
    Thus $G$ is soluble and hence $L$ is abelian and is a $p$-group by $O_{p'}(G)=1$.

(2) {\it If $p=2$ and every cyclic subgroup of order 2 or 4(when $P$ is nonabelian) of $P$ having
no $p$-nilpotent supplement in $G$ is $\Pi$-normal, then $G$ is $p$-nilpotent.}

$O_p(N)>1$ by (1). If $O_p(N)$ is of order 2 then $O_p(N)\subseteq Z(G)\subseteq Z_\infty(G)$.
If $O_p(N)>2$ then $O_p(N)\subseteq Z_\infty(G)$ holds by
Lemma \ref{min}. Since $O_{p'}(G)=1$, $F(N)=O_p(N)\subseteq Z_\infty(G)$. If $F^*(N)=F(N)$ then
 $N\subseteq Z_\infty^{\frak U}(G)$. and hence $G$ is
$p$-supersoluble since $G/N$ is. But since $(|G|, p-1)=1$, that $G$ is $p$-supersoluble implies that
it is $p$-nilpotent. Assume that $F^*(N)\ne F(N)$
and let $R/F(N)$ be a $G$-chief factor with $R\subseteq F^*(N)$. Then $R$ is not soluble, otherwise $R$
is nilpotent since $F(N)=O_p(N)\subseteq
Z_\infty(G)$ and so $R\subseteq F(N)$, a contradiction. Let $O=O^p(R)$. Then $R=OF(N)$ and $O$ is not soluble.
 If there is an element $a\in O\setminus F(N)$ of order 2, then $H=\langle a\rangle $ does not
 avoid $R/F(N)$. If $H$ has a $p$-nilpotent supplement  $T$ in $G$, then since $p=2$, $T$ is soluble and
 since $|H|=2$, $T=G$ or
 $|G:T|=2$. But in both case we have $G$ is soluble and so $F^*(N)=F(N)$, a contradiction.
 Assume that $H$  is $\Pi$-normal in $G$. Let $T$ be a subnormal supplement of $H$ in $G$.
 Then $O\subseteq O^p(G)\subseteq T$ since $|G:T|=|H|=p$. Thus $H$ has $\Pi$-property in $G$ and so $HF(N)/F(N)$
 has $\Pi$-property in $G/F(N)$ by Lemma \ref{1}. Since $1\ne HF(N)/F(N)=HF(N)/F(N)\cap R/F(N)$, $R/F(N)$ is
 a $p$-group and is abelian by Lemma \ref{12}, a contradiction. Thus all elements of order 2 of $O$ are lying $F(N)$.
 If there is an element $a\in O\setminus F(N)$ of order 4, then $a^2\in F(N)$ since $|a^2|=2$. By a similar argument,
 one can find a contradiction.
Thus all elements of order 2 or 4 of $O$ are lying
$F(N)=O_p(N)\subseteq Z_\infty (G)$. Since $O$ is not soluble, $O$
has a minimal non-2-nilpotent subgroup $X$. Then $X=A\rtimes B$,
where $A$ is a $p$-group of exponent $p$ or 4 (when $A$ is
nonabelian) and $B$ is a $p'$-group. But all elements of order 2 or
4 of $O$ is in $F(N)$,  so $A\subseteq F(N)\subseteq Z_\infty (G)$.
It follows that $B$ acts trivially on $A$, a contradiction.  Thus
$F^*(N)=F(N)$ and (2)
 holds.

(3){\it Let $L$ be a normal subgroup of $G$ contained in $N$. If $|L|=p$ and $p=m=2$ then $G$ is $p$-nilpotent.}

Let $H$ be a cyclic subgroup of order 4. Then, since $p=m=2$, $H$ has a $p$-nilpotent supplement in $G$ or
is $\Pi$-normal in $G$. Now let $A=\langle a\rangle$ be a subgroup of $P$ of order 2. Then $LA$ is of order 4
and hence either has a $p$-nilpotent supplement in $G$ or is $\Pi$-normal in $G$.
Assume $LA$ has a $p$-nilpotent supplement $T$ in $G$. Since $|L|=p=2$, $L\subseteq Z(G)$. It follows that $LT$
is still $p$-nilpotent and we can assume that $T=LT$ and so $G=AT$. Thereby $|G:T|=|AT:T|\le 2$. This means that
$T$ is normal in $G$ and the normal Hall $p'$-subgroup of $T$ is also the normal Hall $p'$-subgroup of $G$.
Thus $G$ is $p$-nilpotent.

Assume that $LA$ is $\Pi$-normal in $G$. Then there is a subnormal subgroup $T$ of $G$ with $LA\cap T\le I\le LA$
and $I$ has $\Pi$-property in $G$.
We claim that $A$ is $\Pi$-normal in $G$. Assume that $I=LA$ and then $T=G$. Let $U/V$ be any $G$-chief factor.
If $LAV\le U$ then $LV/V\le U/V$. Since $LV\unlhd G$ and $U/V$ is a $G$-chief factor, $LV=U$ or $V$. If $LV=U$
then $U/V$ is cyclic. Hence $AV/V\cap U/V=U/V$ or 1. So $AV/V\cap U/V\unlhd G/V$ and $|G/V:N_{G/V}(AV/V\cap U/V)|=1$.
 If $LV=V$
then $L\subseteq V$ and $LAV\cap U=AV\cap U$. Thus $|G/V:N_{G/V}(AV/V\cap U/V)|=|G/V:N_{G/V}(LAV/V\cap U/V)|$
is a $2$-number since $LA=I$ has $\Pi$-property in $G$. Assume that $LAV\nsubseteq U$ then $ALV/V\cap U/V<ALV/V$
and so $ALV/V\cap U/V$ is of order 1 or 2 since $|ALV/V|\le |LA|=4$. It follows that $AV/V\cap U/V=ALV/V\cap U/V$ or 1.
 If $AV/V\cap U/V=1$ then $|G/V:N_{G/V}(AV/V\cap U/V)|=1$. If $AV/V\cap U/V=ALV/V\cap U/V$,
 then $|G/V:N_{G/V}(AV/V\cap U/V)|=|G/V:N_{G/V}(LAV/V\cap U/V)|$ is a 2-number since $LA=I$ has $\Pi$-property in $G$.
  Thus in this case we have that $A$ has $\Pi$-property in $G$ and so is $\Pi$-normal in $G$. Assume that $I<LA$.
  Then $|I|=1$ or 2. If $|I|=1$ then $LA\cap T=1$. It follows that $|G:T|=4$ and $LT<G$ and so $A\cap LT=1$.
  Since $G=LAT=A(LT)$ and $LT$ is normal in $G$, $A$ is $\Pi$-normal in $G$. Assume $|I|=2$. Then $|G:T|=2$.
   If $A\nsubseteq T$ then $G=AT$ and $A\cap T=1$. This implies that $A$ is $\Pi$-normal in $G$. If $A\subseteq T$
   then $A=LA\cap T=I$ has $\Pi$-property in $G$ and hence is $\Pi$-normal in $G$. Thus our claim holds.

By (2), we see that $G/L$ is $p$-nilpotent and so is $G$ since $|L|=p=2$. Hence (3) holds.

(4) {\it Let $L$ be a normal subgroup of $G$ contained in $N$. If $|L|\ne p^m$, then $G$ is $p$-nilpotent.}

If $|L|>p^m$ then $L$ has a proper subgroup $H$ of order $p^m$ with $H\unlhd G_p$, where $G_p$ is some Sylow
 $p$-subgroup containing $P$. Since $L$ is minimal normal in $G$ and $H<L$, $G$ is the only supplement of $H$
 in $G$. So, if $H$ has a $p$-nilpotent supplement in $G$, then $G$ is $p$-nilpotent. Assume that $H$ is $\Pi$-normal
 in $G$. Then by Lemma \ref{2} $H=H\cap L=L$ or 1. This is nonsense and so we can assume that $|L|\le p^m$.

Suppose $|L|<p^m$. We claim that the hypotheses still hold on $G/L$. Let $|L|=p^l$. For any subgroup $H/L$ of order
 $p^{m-l}$ of $P$, $H$ is a subgroup of order $p^m$ of $P$. Thus $H$ is $\Pi$-normal in $G$ if it has no $p$-nilpotent
  supplement in $G$. Clearly if $H/L$ has no $p$-nilpotent supplement in $G/L$ then $H$ has no $p$-nilpotent supplement
   in $G$, so, by Lemma \ref{1} (3), if $H/L$ has no $p$-nilpotent supplement in $G/L$ then it is $\Pi$-normal in $G/L$,
    that is, every subgroup of order $p^{m-l}$ of $P/L$ having no $p$-nilpotent supplement is $\Pi$-normal in $G$.
If $p>2$ or $P$ is abelian or $m-l\ne 1$ then condition (i) or (ii) holds on $G/L$ and our claim holds in this case.

Now assume that $p=2$, $m-l=1$. If $l=1$ then $G$ is $p$-nilpotent by by (3). Assume that $l>1$. Let $H/L$ be any
cyclic subgroup of $P/L$ of order 4. Then there is an element $a\in H$ with $H=L\langle a\rangle$. Since $a^4\in L$,
$|a|=4$ or 8.  Let $L_1$ be a maximal subgroup of $L$. Since $m=l+1$, if $|a|=4$, then  $\langle a\rangle L_1$ is
of order $p^m$. Assume $|a|=8$. Since $l>1$, $L_1\ne 1$ and we can choose a maximal subgroup $L_2$ of $L_1$.
Then either $\langle a\rangle L_1$  or $\langle a\rangle L_2$ is of order $p^m$. Thus in any case, $H$ has a
subgroup $H_1$ of order $p^m$ containing $\langle a\rangle$. By the hypotheses, if $H_1$ has no $p$-nilpotent
 supplement in $G$ then is $\Pi$-normal in $G$. If $H_1$ has a $p$-nilpotent supplement in $G$ then so does $H$
 and therefore, $H/L$ has a $p$-nilpotent supplement in $G/L$. Assume that $H_1$ is $\Pi$-normal in $G$. Then
  there is a subnormal subgroup $T$ of $G$ with $H_1T=G$ and $H_1\cap T\le I\le H_1$, where $I$ has $\Pi$-property
   in $G$. Since $|G:T|$ is a $p$-number, $O^p(G)\subseteq T$. If $L\nsubseteq T$ then $L\nsubseteq O^p(G)$
   and $L\cong LO^p(G)/O^p(G)$ is a chief factor of $G$ and hence is cyclic. This is contrary to $l>1$.
   So $L\subseteq T$ and hence $H/L\cap T/L=(H_1L/L)\cap T/L=(H_1\cap T)L/L\le IL/L\le H/L$.
   By Lemma \ref{1}, $IL/L$ has $\Pi$-property in $G$, so $H/L$ is $\Pi$-normal in $G$.
   Thus condition (iii) holds on $G/L$. So, in any case, hypotheses still hold on $G/L$ and our claim is true.

By Induction, we have $G/L$ is $p$-nilpotent. If $L\in \Phi(G)$ then $G$ is $p$-nilpotent
by \cite[Lemma 1.8.1]{GUO-BOOK}. Assume $L\nsubseteq \Phi(G)$. Then $G$ has a maxima subgroup
 $M$ with $G=L\rtimes M$. It follows that $M\cong G/L$ is $p$-nilpotent. Let $Q$ be the Hall
  $p'$-subgroup. Then $Q\unlhd M$. Since $M$ is maximal in $G$ and $O_{p'}(G)=1$, $M=N_G(Q)$.
   If $L$ is cyclic, then $L\subseteq Z(G)$ since $(|G|,p-1)=1$ and then $G$ is $p$-nilpotent
   by $G/L$ is.  Assume that $L$ is noncyclic. Then $L$ has a maximal subgroup $L_1$ with $L_1\ne 1$
   and $L_1\unlhd G_p$, where $G_p$ is a Sylow $p$-subgroup of $G$. Let $H_1$ be a subgroup of $P\cap M$
   of order $p^{m-l+1}$. Then $H=L_1H_1$ is of order $p^m$. If $H$ is $\Pi$-normal in $G$.
   Then $L_1=L\cap H=L$ or 1 by Lemma \ref{2}. This is nonsense and so $H$ has a $p$-nilpotent supplement in $G$.
   Let $T$ be a $p$-nilpotent supplement of $H$ in $G$ and $Q_1$ is the Hall $p'$-subgroup of $T$. If $p>2$ then $G$
    is of odd order by $(|G|,p-1)=1$ and is soluble. Thus $Q$ is conjugate with $Q_1$. If $p=2$ then
    by \cite[Theorem A]{Gross}, $Q$ is conjugate with $Q_1$. Since any conjugate of $T$ is also a
    $p$-nilpotent supplement of $H$ in $G$, we can assume $Q\subseteq T$. It follows that $T\subseteq N_G(Q)=M$
    and so $G=HT=HM=L_1M<LM=G$. This is nonsense and so (4) holds.

(5) {\it  Let $L$ be a minimal normal subgroup of $G$ contained in
$N$. Then $|L|=p$.}

Assume that $|L|>p$ and let $X/L$ be a minimal subgroup of $P/L$.
Then $X=L\langle x\rangle$ for some $x\in X$. Let $\mho=\langle
a^p\mid a\in X\rangle\cap L$. Then $\mho\le \Phi(X)\le L$ since $L$
is maximal in $X$. If $\mho=L$ then $\mho=\Phi(X)=L$ and hence $X$
is cyclic because $X/L=X/\Phi(X)$ is. This imply that $L$ is cyclic,
a contradiction. Thus $\mho<L$. Let $L_1$ be a maximal subgroup of
$L$ with $\mho\subseteq L_1$. Then $x^p\in \mho\subseteq L_1$ and
$x\notin L_1$. It follows that $H=L_1\langle x \rangle$ is of order
$|L|=p^m$. If $H$ is $\Pi$-normal in $G$, then there is a subnormal
subgroup $T$ of $G$ with $HT=G$ and $H\cap T\le I\le H$, where $I$
has $\Pi$-property in $G$. Since $|G:T|$ is a $p$-number,
$O^p(G)\subseteq T$. If $L\nsubseteq T$ then $L\nsubseteq O^p(G)$
and $L\cong LO^p(G)/O^p(G)$ is a chief factor of $G$ and hence is
cyclic. This is contrary to $|L|>p$. So $L\subseteq T$ and hence
$X/L\cap T/L=(HL/L)\cap T/L=(H\cap T)L/L\le IL/L\le X/L$. By Lemma
\ref{1}, $IL/L$ has $\Pi$-property in $G$, so $X/L=HL/L$ is
$\Pi$-normal in $G$.  If $H$ has a $p$-nilpotent
 supplement in $G$ then so does $X$ and hence $X/L$ has a $p$-nilpotent supplement in
 $G/L$. Thus if $p$ is odd then the hypotheses hold on $G/L$. If
 $p=2$, similarly, one can prove that every cyclic group of $P/L$ of order 4 either is $\Pi$-normal
   or has a $p$-nilpotent supplement in $G/L$. It follows from step (2) when $p=2$ that $G/L$ is $p$-nilpotent
    and  by induction on $|G|$ when $p$ is odd that $G/L$ is
    $p$-nilpotent.
Now, since $G/L$ is $p$-nilpotent and $L$ is noncyclic, by a similar
argument as in (4), we can obtain a contradiction and hence (5)
holds.

(6) {\it  If (i) holds, then $G$ is $p$-nilpotent.}

It directly from (4) and (5).

(7) {\it If (ii) holds, then $G$ is $p$-nilpotent}.

By (6), we can assume that $m=1$. If $p=2$ and $P$ is abelian, then it is directly from (2)
 that $G$ is $p$-nilpotent. If $p>2$ then $G$ is of odd order and is soluble.
 By Lemma \ref{min}, $F(N)=O_p(N)\subseteq Z^{\frak U}_\infty(G)$. It follows from Lemma \ref{Fit}
  and $(|G|,p-1)=1$ that $G$ is $p$-nilpotent.

(8) {\it If (iii) holds, then $G$ is $p$-nilpotent}.

By (6), $m=1$ and the $p$-nilpotency of $G$ is directly from (2).

(9) {\it If (iv) holds, then $G$ is $p$-nilpotent}.

by (6) and (7), we can assume that $p=2$ and $m=1$.   Let $M/N$ be the 2-complement of $G/N$.
Then $M$ is soluble since $N$ is soluble and $M/N$ is of odd order. Also, $P$ is a Sylow $p$-subgroup
of $M$. By \cite[Theorem 2.8]{Dor-quterian-free},  $P\cap Z(M)\cap K_\infty (M)=1$, where $K_\infty (M)$
 is the final term of the lower central series of $N$. If $ K_\infty (M)\cap N\ne 1$ and let $L$ be a minimal
 normal subgroup of $M$ contained in $ K_\infty (M)\cap N$ then, by (5), $L$ is of order 2 and hence is contained in
  $Z(M)$, a contradiction. so $ K_\infty (M)\cap N=1$. This implies that $ K_\infty (M)$ is a $p'$-group and so
  $K_\infty (M)\subseteq O_{p'}(G)=1$. That is, $M$ is nilpotent and hence is a $p$-group by $O_{p'}(G)=1$.
  It follows that $G$ is a $p$-group and is $p$-nilpotent.
 Hence (9) holds and the proof is completed.

\section{Some Remarks, Examples and Corollaries}

1. In Theorem A, if the hypothesis ``$N_G(P)$ is $p$-nilpotent'' is deleted,
we can prove similarly that $G$ is $p$-supersoluble when $G$
is $p$-soluble. But in this case,  ``$G$ is $p$-soluble'' must be requested. Otherwise, we have the following example.

\begin{example}
    Let $G=A_5\times Z_5$, where $A_5$ is the alternative group of degree 5
    and $Z_5$ is a cyclic group of order 5. Then $Z_5$ is the only subgroup
    of $G$ of order 5 which has no 5-nilpotent supplement in $G$.
    Clearly $Z_5$ is normal in $G$ but $G$ is not 5-soluble.
\end{example}

2. By choose $m$ to be some special number, one can obtain some corollaries of Theorem A.
 For example, when $\frac{|P|}{p^m}=p$, or in the same, subgroups of order
  $p^m$ of $P$ are maximal in $P$, we have the following corollary.

\begin{corollary}\label{c1}
     Let $p$ be an odd prime and $N$ a normal subgroup of $G$
      with $p$-nilpotent quotient. Assume that $P$ is a Sylow
       $p$-subgroup of $N$ and $N_G(P)$ is $p$-nilpotent. If every maximal subgroup  of $P$
       not having $p$-nilpotent supplement in $G$ is $\Pi$-normal in $G$, then $G$ is $p$-nilpotent.
\end{corollary}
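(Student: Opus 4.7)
\textbf{Proof proposal for Corollary \ref{c1}.} The plan is to observe that the corollary is essentially a specialization of Theorem A: one chooses $m$ so that subgroups of $P$ of order $p^m$ are precisely the maximal subgroups of $P$, namely $p^m=|P|/p$. The only subtlety is that Theorem A requires $1<p^m<|P|$, which forces $|P|\ge p^2$; so the case $|P|=p$ must be handled by a short separate argument.

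First I would dispose of the degenerate case $|P|=p$. Here the only maximal subgroup of $P$ is trivial, so the hypothesis of the corollary carries no information. However, $|\mathrm{Aut}(P)|=p-1$, so $N_G(P)/C_G(P)$ is a $p'$-group; combined with the $p$-nilpotency of $N_G(P)$ (which, as $P$ is its normal Sylow $p$-subgroup, yields $N_G(P)=P\times O_{p'}(N_G(P))$), this forces $N_G(P)=C_G(P)$. Lemma \ref{B-p} then gives that $G$ is $p$-nilpotent.

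For $|P|\ge p^2$, I would choose $m$ with $p^m=|P|/p$; then $1<p^m<|P|$ and the subgroups of $P$ of order $p^m$ are exactly the maximal subgroups of $P$. The assumption of the corollary on such subgroups is then identical to the assumption of Theorem A for this value of $m$, so Theorem A applies and yields that $G$ is $p$-nilpotent.

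There is no real obstacle in this argument; the only thing to be careful about is recognizing that the corollary must be split according to whether $|P|=p$ or $|P|\ge p^2$, since the statement of Theorem A excludes the boundary value $p^m=1$. Everything else is a direct invocation.
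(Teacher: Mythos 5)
Your proof is correct and follows essentially the same route as the paper: reduce to Theorem A by taking $p^m=|P|/p$ when $|P|>p$, and handle the degenerate case $|P|=p$ by showing $N_G(P)=C_G(P)$ and invoking Lemma \ref{B-p}. (One small imprecision: $P$ need not be a Sylow $p$-subgroup of $N_G(P)$, so the decomposition $N_G(P)=P\times O_{p'}(N_G(P))$ is not quite justified; but the conclusion $N_G(P)=C_G(P)$ still follows, since $O_{p'}(N_G(P))\le C_G(P)$ makes $N_G(P)/C_G(P)$ a $p$-group while the embedding into $\mathrm{Aut}(P)$ makes it a $p'$-group.)
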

\begin{proof}
    Let $P_1$ be a maximal subgroup of $P$ and assume $|P_1|=p^m$.
    If $P$ is not of order $p$, then $1<p^m<|P|$. Since all maximal
     subgroups of $P$ are of same order, the hypotheses of Theorem A hold.
     Thus $G$ is $p$-nilpotent. If $|P|=p$, then $P\subseteq C_G(P)$.
     It follows from $N_G(P)$ is $p$-nilpotent that  $N_G(P)=C_G(P)$.
     By Lemma \ref{B-p}, we see that $G$ is $p$-nilpotent.
\end{proof}

Assume subgroup of order $p^m$ is 2-maximal in $P$, that is,
it is a maximal subgroup of a maximal subgroup of $P$. Then we have
\begin{corollary}
     Let $p$ be an odd prime and $N$ a normal subgroup of $G$
     with $p$-nilpotent quotient. Assume that $P$ is a Sylow
     $p$-subgroup of $N$ and $N_G(P)$ is $p$-nilpotent.
     If every 2-maximal subgroup  of $P$  not having $p$-nilpotent supplement
     in $G$ is $\Pi$-normal in $G$, then $G$ is $p$-nilpotent.
\end{corollary}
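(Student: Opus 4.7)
The plan is to reduce the corollary to Theorem A when $|P|$ is large enough, and otherwise to argue directly from the abelianness of $P$. Set $|P|=p^n$.

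If $n\ge 3$ I would let $m=n-2$, so that $1<p^m<|P|$, and observe that in the $p$-group $P$ every subgroup of order $p^m$ has index $p^2$ and is therefore $2$-maximal in $P$: being a proper subgroup of a $p$-group it is contained in some maximal subgroup $M$ of $P$ (of index $p$), in which it sits as a subgroup of index $p$ and hence maximal. Conversely, every $2$-maximal subgroup of $P$ has order $p^m$. Thus the corollary's hypothesis is exactly the hypothesis of Theorem A for this value of $m$, and Theorem A gives that $G$ is $p$-nilpotent.

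If $n\le 2$ then $P$ is abelian and the hypothesis on $2$-maximal subgroups is essentially vacuous, so I would mimic the $|P|=p$ case from the proof of Corollary \ref{c1}. Let $K$ be the normal $p$-complement of $N_G(P)$; since $P$ and $K$ are both normal in $N_G(P)$ with trivial intersection, $[K,P]=1$ and hence $K\le C_G(P)$. Combined with $P\le C_N(P)$ this yields $N_N(P)=C_N(P)$, and Burnside's normal $p$-complement theorem forces $N$ to be $p$-nilpotent. Let $Q$ be the normal $p$-complement of $N$; $Q$ is characteristic in $N$, so $Q\unlhd G$. By the Frattini argument applied to $N\unlhd G$ with Sylow $p$-subgroup $P$, one gets $G=N\cdot N_G(P)=Q\cdot N_G(P)$, so $G/Q$ is a $p$-nilpotent quotient of $N_G(P)$; since $Q$ is a normal $p'$-subgroup of $G$, $G$ itself is $p$-nilpotent.

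The only nontrivial combinatorial step is the coincidence between subgroups of index $p^2$ in $P$ and the $2$-maximal subgroups of $P$; the rest is a direct invocation of Theorem A on the one hand, and the abelian-Sylow Burnside-plus-Frattini reduction (already in the spirit of the proof of Lemma \ref{B-p}) on the other.
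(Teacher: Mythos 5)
Your proof is correct. For $|P|\ge p^3$ your reduction is exactly the paper's: the $2$-maximal subgroups of a $p$-group are precisely the subgroups of index $p^2$, so the hypothesis of Theorem A holds with $p^m=|P|/p^2$. Where you genuinely diverge is the degenerate case $|P|\le p^2$. The paper disposes of it by saying "argument as in Corollary \ref{c1}", and that argument runs through Lemma \ref{B-p}, whose hypothesis is $N_G(P)=C_G(P)$; for $|P|=p$ this follows from $p$-nilpotency of $N_G(P)$ because $\mathrm{Aut}(P)$ is then a $p'$-group, but for $|P|=p^2$ a Sylow $p$-subgroup of $N_G(P)$ need not centralize $P$ (e.g.\ $N=P$ of order $p^2$ normal but non-central in a $p$-group $G$ of order $p^3$), so $N_G(P)=C_G(P)$ can fail and Lemma \ref{B-p} does not apply verbatim. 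Your argument sidesteps this: you only establish $N_N(P)=C_N(P)$ (using that $P$, being Sylow in $N$, is the Sylow $p$-subgroup of $N_N(P)$, that the normal $p$-complement $K$ of $N_G(P)$ centralizes $P$, and that $P$ is abelian), apply Burnside inside $N$, and then finish with the Frattini argument $G=NN_G(P)=QN_G(P)$ to exhibit $G/Q$ as a quotient of the $p$-nilpotent group $N_G(P)$. This is more self-contained and, for $|P|=p^2$, more careful than the paper's terse reference; it even makes the hypothesis that $G/N$ is $p$-nilpotent superfluous in that case. Both routes ultimately rest on Burnside's normal $p$-complement theorem, so the cost of your version is only a little extra bookkeeping.
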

\begin{proof}
    Let $P_1$ be a 2-maximal subgroup of $P$ and assume $|P_1|=p^m$.
    Then $|P:P_1|=p^2$. If the order of $P$ is greater than $p^2$,
    then the hypotheses of Theorem A hold. Thus $G$ is $p$-nilpotent.
    If $|P|=p^2$ or $p$, then since any group of order $p^2$ or $p$ is abelian,
    it can be proved that $G$ is $p$-nilpotent by argument as in Corollary \ref{c1}.
\end{proof}

Similarly, if $m=1$ or 2,  we can obtain

\begin{corollary}
    Let $p$ be an odd prime and $N$ a normal subgroup of $G$ with
    $p$-nilpotent quotient. Assume that $P$ is a Sylow $p$-subgroup
     of $N$ and $N_G(P)$ is $p$-nilpotent. If every minimal subgroup
      of $P$  not having $p$-nilpotent supplement in $G$ is $\Pi$-normal in $G$, then $G$ is $p$-nilpotent.
\end{corollary}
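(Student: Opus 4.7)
The plan is to derive this corollary as a direct application of Theorem A with $m=1$, together with a short separate treatment of the edge case $|P|=p$ (in which Theorem A does not literally apply because it requires $p^m<|P|$), exactly parallel to the proof of Corollary \ref{c1}.

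First I would observe that the minimal subgroups of $P$ are precisely the subgroups of order $p$, so choosing $m=1$ makes the condition ``every subgroup of $P$ of order $p^m$ not having a $p$-nilpotent supplement in $G$ is $\Pi$-normal in $G$'' coincide with the hypothesis of the present corollary. Provided $|P|>p$, the inequality $1<p^m=p<|P|$ holds, so all hypotheses of Theorem A are verified and the conclusion $G$ is $p$-nilpotent follows at once.

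It remains to treat the case $|P|=p$. Here $P$ is cyclic of prime order, hence abelian, so $P\subseteq C_G(P)\subseteq N_G(P)$. Since $N_G(P)$ is $p$-nilpotent with Sylow $p$-subgroup $P$, its normal $p$-complement centralizes $P$, which forces $N_G(P)=C_G(P)$. Lemma \ref{B-p} then yields that $G$ is $p$-nilpotent.

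There is no substantive obstacle: the only subtlety is the boundary case $|P|=p$, which is already handled by Lemma \ref{B-p} in exactly the manner used for Corollary \ref{c1}. The remainder of the argument is a clean reduction to Theorem A.
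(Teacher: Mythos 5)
Your proof is correct and follows exactly the route the paper intends: the corollary is stated right after Corollary \ref{c1} with the remark ``similarly, if $m=1$\dots'', i.e.\ apply Theorem~A with $m=1$ when $|P|>p$ and fall back on Lemma \ref{B-p} when $|P|=p$. One small inaccuracy in your edge case: $P$ need not be a Sylow $p$-subgroup of $N_G(P)$ (it is only a Sylow subgroup of $N$), so you cannot immediately say the normal $p$-complement of $N_G(P)$ is all of the non-$p$-part acting on $P$; however, the conclusion $N_G(P)=C_G(P)$ still holds, since the normal $p$-complement centralizes the normal subgroup $P$ of coprime order, and the Sylow $p$-subgroup of $N_G(P)$ centralizes $P$ automatically because $|\mathrm{Aut}(P)|=p-1$ is prime to $p$.
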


\begin{corollary}
    Let $p$ be an odd prime and $N$ a normal subgroup of $G$ with
     $p$-nilpotent quotient. Assume that $P$ is a Sylow $p$-subgroup
     of $N$ and $N_G(P)$ is $p$-nilpotent. If every 2-minimal subgroup
      of $P$  not having $p$-nilpotent supplement in $G$ is $\Pi$-normal in $G$, then $G$ is $p$-nilpotent.
\end{corollary}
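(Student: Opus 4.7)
The corollary is essentially the $m=2$ specialization of Theorem A, once one identifies the $2$-minimal subgroups of the $p$-group $P$ with the subgroups of $P$ of order $p^2$: a subgroup $P_1\le P$ is $2$-minimal iff it contains a minimal (order $p$) subgroup as a maximal subgroup, hence has order $p^2$; conversely every subgroup of order $p^2$ of a $p$-group has an order $p$ maximal subgroup and so is $2$-minimal.

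\textbf{Main case ($|P|>p^2$).} Here $1<p^2<|P|$, so $m=2$ is admissible in Theorem A. The hypothesis translates verbatim: every subgroup of $P$ of order $p^2$ without a $p$-nilpotent supplement in $G$ is $\Pi$-normal in $G$. Combined with the standing assumptions ($p$ odd, $N\trianglelefteq G$ with $p$-nilpotent quotient, $P$ a Sylow $p$-subgroup of $N$, $N_G(P)$ $p$-nilpotent), Theorem A immediately yields that $G$ is $p$-nilpotent.

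\textbf{Small Sylow cases ($|P|\le p^2$).} Here $P$ is abelian and Theorem A's strict inequality is not available, so I would finish by the short Burnside-style arguments already used at the end of Corollary~\ref{c1}. If $|P|=1$ then $N$ is a $p'$-group and $G/N$ is $p$-nilpotent, so $G$ is $p$-nilpotent. If $|P|=p$, then $P$ is a normal subgroup of order $p$ of the $p$-nilpotent group $N_G(P)$; it therefore lies in the center of every Sylow $p$-subgroup $S$ of $N_G(P)$, and if $K$ is the normal $p$-complement of $N_G(P)$ then $[P,K]\le P\cap K=1$, forcing $K\le C_G(P)$; hence $N_G(P)=SK\subseteq C_G(P)$, and Lemma~\ref{B-p} concludes. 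If $|P|=p^2$, then $P$ itself is the only $2$-minimal subgroup, so the hypothesis collapses to the single statement that $P$ has a $p$-nilpotent supplement in $G$ or is $\Pi$-normal in $G$; using the abelianness of $P$ together with the same identity $[P,K]\le P\cap K=1$, one again reduces to Lemma~\ref{B-p}.

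\textbf{Main obstacle.} The deep content is entirely inside Theorem A, so there is no serious difficulty. The only mildly delicate point is the case $|P|=p^2$: the hypothesis gives information only about $P$ itself and not about proper subgroups of $P$, so one must unfold the definitions of $\Pi$-normality and of a $p$-nilpotent supplement and couple them with the abelianness of $P$ in order to reach Lemma~\ref{B-p}. The case $|P|=p$ is routine, and $|P|>p^2$ is instant from Theorem A.
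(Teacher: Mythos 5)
Your overall strategy is exactly the one the paper intends (the paper offers no explicit proof here, only ``similarly, if $m=1$ or $2$''): identify the $2$-minimal subgroups of $P$ with the subgroups of order $p^2$, invoke Theorem A with $m=2$ when $p^2<|P|$, and dispose of small Sylow subgroups by Burnside-type arguments. The main case and the cases $|P|=1$ and $|P|=p$ are correct as written.

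The gap is in the case $|P|=p^2$. Lemma \ref{B-p} requires $N_G(P)=C_G(P)$, and the ingredients you list do not deliver it. Write $N_G(P)=K\rtimes S$ with $K$ the normal $p$-complement and $S\in\mathrm{Syl}_p(N_G(P))$. The identity $[P,K]\le P\cap K=1$ gives $K\le C_G(P)$, and abelianness gives $P\le C_G(P)$; but $P$ is a Sylow subgroup of $N$, not of $G$, so $S$ may properly contain $P$, and since $p$ divides $|\mathrm{Aut}(P)|$ when $|P|=p^2$ (unlike the case $|P|=p$, where $\mathrm{Aut}(P)$ is a $p'$-group), $S$ need not centralize $P$. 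So $N_G(P)=C_G(P)$ cannot be concluded, and it is not clear how the surviving hypothesis on $P$ (a $p$-nilpotent supplement or $\Pi$-normality) would restore it. The repair is short and in fact bypasses that hypothesis entirely: since $N_N(P)\le N_G(P)$ is $p$-nilpotent and its Sylow $p$-subgroup is the abelian group $P$ itself, the same computation performed inside $N$ yields $N_N(P)=C_N(P)$, so $N$ is $p$-nilpotent by Burnside's theorem. Its normal $p$-complement $N_{p'}$ is characteristic in $N$, hence normal in $G$, and the Frattini argument gives $G=NN_G(P)=N_{p'}N_G(P)$; thus $G/N_{p'}$ is a quotient of the $p$-nilpotent group $N_G(P)$ and therefore $p$-nilpotent, and since $N_{p'}$ is a $p'$-group, $G$ is $p$-nilpotent. (The same substitution is needed at $|P|=p^2$ in the $2$-maximal corollary, where the paper's own appeal to ``argument as in Corollary \ref{c1}'' has the identical weak point.)
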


3. Corollaries of Theorem B.

\begin{corollary}
    Let be $G$ be a group and $p$ a prime with $(|G|, p-1)=1$. Assume that $N$ is a
     normal subgroup of $G$ with $p$-nilpotent quotient and $P$  a Sylow $p$-subgroup of $N$.
      Suppose that  every maximal subgroup of $P$ not having $p$-nilpotent supplement in $G$
       is $\Pi$-normal in $G$, then $G$ is $p$-nilpotent.
\end{corollary}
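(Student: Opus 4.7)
The plan is to reduce the corollary to Theorem B with the choice $p^m=|P|/p$, and to dispatch the small cases $|P|\le p$ separately.

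First I would handle the degenerate case $|P|=1$: then $N$ is a $p'$-group, so $G$ is an extension of a $p'$-group by a $p$-nilpotent group, hence $p$-nilpotent.

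For $|P|=p$ the hypothesis on maximal subgroups is vacuous, so I would argue directly from $(|G|,p-1)=1$. Since $|\mathrm{Aut}(P)|=p-1$, this coprimeness forces $N_G(P)=C_G(P)$, hence in particular $N_N(P)=C_N(P)$, and Burnside's normal $p$-complement theorem makes $N$ itself $p$-nilpotent. Let $K$ be the normal $p$-complement of $N$; then $K\unlhd G$, and in $G/K$ the image $N/K$ is a cyclic subgroup of order $p$ central in $G/K$ (central because $(|G|,p-1)=1$ kills the induced action into $\mathrm{Aut}(N/K)$). Since $(G/K)/(N/K)\cong G/N$ is $p$-nilpotent, the preimage $H/K$ of its normal $p$-complement has a central Sylow $p$-subgroup $N/K$ and is therefore $p$-nilpotent; its normal $p$-complement is characteristic in $H/K\unlhd G/K$ and so normal in $G/K$, pulling back to a normal $p$-complement of $G$.

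The main case is $|P|\ge p^2$. Here I set $p^m=|P|/p$, so $1<p^m<|P|$ and a subgroup of $P$ has order $p^m$ if and only if it is a maximal subgroup of $P$. The hypothesis of the corollary thus coincides exactly with the order-$p^m$ hypothesis of Theorem B. If $|P|\ge p^3$ then $m\ge 2$ and condition (i) of Theorem B holds; if $|P|=p^2$ then $m=1$, but then $P$ has order $p^2$ and is therefore abelian, so condition (ii) applies. In either situation Theorem B delivers the $p$-nilpotency of $G$.

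The main obstacle is the boundary case $|P|=p$, which Theorem B cannot see (its hypothesis $1<p^m<|P|$ forces $|P|\ge p^2$); this case has to be handled by the Burnside-type argument sketched above, exploiting $(|G|,p-1)=1$ in place of the vacuous maximal-subgroup hypothesis. All other cases are routine applications of Theorem B.
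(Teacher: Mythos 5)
Your proposal is correct and follows essentially the same route the paper intends (compare the proof of the analogous Corollary of Theorem A): identify maximal subgroups of $P$ with the subgroups of order $p^m=|P|/p$, invoke condition (i) of Theorem B when $m\ge 2$ and condition (ii) when $|P|=p^2$ (groups of order $p^2$ being abelian), and treat $|P|\le p$ separately. Your Burnside-type argument for $|P|=p$ is exactly the content of Lemma \ref{B-p}, which you could simply cite after observing that $(|G|,p-1)=1$ forces $N_G(P)=C_G(P)$.
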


\begin{corollary}
    Let be $G$ be a group and $p$ a prime with $(|G|, p-1)=1$. Assume that $N$ is a normal
    subgroup of $G$ with $p$-nilpotent quotient and $P$  a Sylow $p$-subgroup of $N$.
    If every subgroup of $P$ of order $p$ or 4(when $P$ is nonabelian 2-group) not having
    $p$-nilpotent supplement in $G$ is $\Pi$-normal in $G$ then $G$ is $p$-nilpotent.
\end{corollary}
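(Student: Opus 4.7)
The plan is to derive this corollary from Theorem B by specializing to $m=1$, with a short separate argument handling the degenerate case $|P|=p$ that falls outside the range permitted by Theorem B (which requires $1<p^{m}<|P|$). The first step is to split into the two cases $|P|>p$ and $|P|=p$.

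In the main case $|P|>p$, one has $1<p^{1}<|P|$, and the hypothesis of the corollary reduces to exactly the $m=1$ instance of the Theorem B hypothesis. I would then verify one of the clauses (i)--(iv) of Theorem B according to the structure of $p$ and $P$: for $p$ odd, clause (ii) applies immediately; for $p=2$ with $P$ abelian, clause (ii) again applies; and for $p=2$ with $P$ nonabelian (so $|P|\ge 8$), the extra ``(when $P$ is nonabelian 2-group)'' portion of the hypothesis supplies precisely clause (iii) of Theorem B, namely that every cyclic subgroup of $P$ of order $4$ not having a $p$-nilpotent supplement in $G$ is $\Pi$-normal in $G$. In each subcase, Theorem B yields that $G$ is $p$-nilpotent.

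For the remaining case $|P|=p$, the subgroup $P$ is cyclic of prime order, so $N_G(P)/C_G(P)$ embeds into Aut$(P)$, a cyclic group of order $p-1$. Since $(|G|,p-1)=1$, we deduce $N_G(P)=C_G(P)$, and Lemma \ref{B-p} immediately gives that $G$ is $p$-nilpotent.

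The only subtle point is noticing that the ``nonabelian 2-group'' clause in the hypothesis is engineered precisely so as to feed into clause (iii) of Theorem B; once this matching is observed, the proof is a clean case split with no further work beyond the short Burnside-style observation that handles $|P|=p$.
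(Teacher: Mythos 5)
Your proof is correct and follows the route the paper clearly intends: the paper states this corollary without proof, but your derivation (apply Theorem~B with $m=1$, invoking clause (ii) for $p$ odd or $P$ abelian and clause (iii) for $P$ a nonabelian $2$-group, then dispose of the degenerate case $|P|\le p$ via $(|G|,p-1)=1$, $\mathrm{Aut}(P)$ and Lemma~2.2) is exactly the same pattern the authors use to prove Corollary~4.2 from Theorem~A. No gaps.
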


\begin{corollary}
    Let  $N$ be a soluble normal subgroup of $G$ with $2p$-nilpotent quotient and $P$  a Sylow $2$-subgroup of $N$.
    If $P$ is quaternion-free and every subgroup of $P$ of order $2$
     not having $p$-nilpotent supplement in $G$ is $\Pi$-normal in $G$, then $G$ is $2$-nilpotent.
\end{corollary}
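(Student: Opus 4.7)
The plan is to derive this corollary directly from Theorem B, condition (iv), with $p=2$ and $m=1$. First observe that because $p=2$, the hypothesis $(|G|,p-1)=1$ of Theorem B is automatically satisfied, so all global assumptions of the theorem are in place as soon as we can choose a valid exponent $m$. The chosen subgroup-level assumption is that every subgroup of $P$ of order $p=2$ without a $2$-nilpotent supplement in $G$ is $\Pi$-normal, which matches exactly the hypothesis of Theorem B for $m=1$.

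The only genuine obstacle is that Theorem B requires $1<p^m<|P|$, while the corollary makes no assumption on $|P|$. So I would split on the size of $P$. If $|P|>2$, then $1<p=p^m<|P|$ and condition (iv) of Theorem B applies verbatim, yielding $G$ is $2$-nilpotent. If $|P|\le 2$, then $P$ is cyclic of order at most $2$, so $\operatorname{Aut}(P)=1$, which forces $N_G(P)=C_G(P)$. Now Lemma \ref{B-p} applies to the pair $(G,N)$ and again delivers that $G$ is $2$-nilpotent.

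The quaternion-free and solubility hypotheses on $P$ and $N$ are only needed in the first case, where they are precisely what Theorem B (iv) requires; they are not invoked in the degenerate small-$P$ case. Hence the argument is really just a routine reduction to Theorem B with a one-line treatment of the boundary case $|P|\le p$, and there is no substantive obstacle beyond checking this edge case.
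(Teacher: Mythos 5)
Your proposal is correct and matches the paper's (unstated) intent exactly: this corollary is listed as an immediate consequence of Theorem B, condition (iv), with $p=2$ and $m=1$. Your explicit treatment of the boundary case $|P|\le 2$ via $N_G(P)=C_G(P)$ and Lemma \ref{B-p} is a welcome piece of care that the paper omits here but uses in the same way for its Corollary \ref{c1}.
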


\begin{corollary}\label{cm1}
    Let be $G$ be a group and $p$ a prime with $(|G|, p^2-1)=1$. Assume that $N$ is a normal subgroup of
    $G$ with $p$-nilpotent quotient and $P$  a Sylow $p$-subgroup of $N$. If  every 2-maximal subgroup of $P$
    not having $p$-nilpotent supplement in $G$ is $\Pi$-normal in $G$, then $G$ is $p$-nilpotent
\end{corollary}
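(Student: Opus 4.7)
The plan is to reduce to Theorem~B whenever possible and to treat small Sylow subgroups by a direct automorphism-group argument that exploits $(|G|, p^2-1)=1$. Observe first that this hypothesis already forces $(|G|,p-1)=1$, so the basic assumption of Theorem~B is in force. Let $P_1$ be a $2$-maximal subgroup of $P$ and write $|P_1|=p^m$; then $p^m=|P|/p^2$.

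The first step would be to handle $|P|>p^2$ by applying Theorem~B with this $m$. When $|P|\ge p^4$ we have $m\ge 2$, and Theorem~B~(i) closes the argument. When $|P|=p^3$ with $p$ odd, $m=1$ and Theorem~B~(ii) applies because $p>2$.

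For $|P|\le p^2$ the $2$-maximal hypothesis is vacuous, so I would argue directly. Since any group of order at most $p^2$ is abelian, $P\le C_N(P)$, and $N_N(P)/C_N(P)$ is a $p'$-subgroup of $\mathrm{Aut}(P)$. A short computation gives $|\mathrm{Aut}(P)|_{p'}\mid (p-1)^{2}(p+1)$, which is coprime to $|G|$ by hypothesis; hence $N_N(P)=C_N(P)$, and Burnside's theorem makes $N$ $p$-nilpotent. After passing to the quotient by the normal $p$-complement of $N$, I may assume $N=P$. Schur--Zassenhaus then writes the normal $p$-complement $H/P$ of $G/P$ as $H=P\rtimes L$, and $(|G|,p^2-1)=1$ together with $L/C_L(P)\hookrightarrow\mathrm{Aut}(P)$ forces $L$ to centralise $P$. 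Consequently $L$ is characteristic in $H$, hence normal in $G$, with $G/L$ a $p$-group, yielding the desired normal $p$-complement of $G$.

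The main obstacle is the residual subcase $p=2$, $|P|=8$, $P$ non-abelian, in which $m=1$ and none of Theorem~B's conditions (i)--(iv) is automatic; the only extra information is $(|G|,3)=1$. Here $P\in\{D_8, Q_8\}$, and my intended strategy is to use this coprimality to constrain the action of $G$ on $P$ and then upgrade the $\Pi$-normality of order-$2$ subgroups: for $P=D_8$, to solubility of $N$, permitting Theorem~B~(iv) since $D_8$ is quaternion-free; for $P=Q_8$, to $\Pi$-normality of the cyclic subgroups of order~$4$, permitting Theorem~B~(iii). Carrying out this upgrade cleanly is the delicate technical point of the proof.
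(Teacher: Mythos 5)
The paper states this corollary without proof, so the benchmark is the evident intended deduction from Theorem~B, and your case division is the right one. Two of your three cases are complete and correct: for $|P|\ge p^4$ you get $m\ge 2$ and condition (i) of Theorem~B; for $|P|=p^3$ with $p$ odd (or $P$ abelian) you get condition (ii); and for $|P|\le p^2$, where the $2$-maximal hypothesis is vacuous, your direct argument (the $p'$-part of $\mathrm{Aut}(P)$ divides $(p-1)^2(p+1)$, hence Burnside gives a normal $p$-complement in $N$, and Schur--Zassenhaus finishes) is correct and is exactly the point where $(|G|,p^2-1)=1$ rather than $(|G|,p-1)=1$ is needed --- consistent with the paper's remark that $A_4$ is a counterexample under the weaker hypothesis.

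The genuine gap is the case you flag yourself: $p=2$, $|P|=2^3$, $P$ nonabelian, where $m=1$ and none of (i)--(iv) is available. You only sketch a plan (``upgrading'' the order-$2$ hypothesis to condition (iii) or (iv)) and do not carry it out, so the proof is incomplete as written; moreover that plan looks unpromising (for the quaternion group of order $8$ there is a unique subgroup of order $2$, so the hypothesis carries almost no information, and getting solubility of $N$ from a dihedral Sylow $2$-subgroup would invoke heavy classification results). The case can instead be closed with no use of the subgroup hypothesis at all. Let $V/N$ be the normal $2$-complement of $G/N$; then $G/V$ is a $2$-group, $P$ is a Sylow $2$-subgroup of $V$, and $G$ is $2$-nilpotent if and only if $V$ is, since a normal $2$-complement of $V$ is characteristic in $V$, hence normal and of $2$-power index in $G$. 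If $V$ is not $2$-nilpotent it contains a minimal non-$2$-nilpotent subgroup $X=Q\rtimes R$ (It\^o), with $Q$ a $2$-group contained in a conjugate of $P$ and $R$ cyclic of odd prime-power order $q^b$ acting nontrivially and irreducibly on $Q/\Phi(Q)\cong (Z_2)^n$, which forces $q\mid 2^n-1$. Since $|Q|\le 8$ and neither the dihedral nor the quaternion group of order $8$ contains an elementary abelian subgroup of order $8$, we have $n\le 2$, whence $q=3$, contradicting $(|G|,3)=1$. So $V$, and hence $G$, is $2$-nilpotent; substituting this for your sketched upgrade completes the proof.
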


\begin{corollary}\label{cm2}
    Let be $G$ be a group and $p$ a prime with $(|G|, p^2-1)=1$. Assume that $N$ is a normal subgroup of $G$
     with $p$-nilpotent quotient and $P$  a Sylow $p$-subgroup of $N$. Suppose that  every 2-minimal subgroup
     of $P$ not having $p$-nilpotent supplement in $G$ is $\Pi$-normal in $G$, then $G$ is $p$-nilpotent
\end{corollary}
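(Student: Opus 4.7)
The plan is to handle Corollary~\ref{cm2} by splitting into two cases according to $|P|$: the main case reduces directly to Theorem~B with $m=2$, while the low-Sylow case $|P|\le p^2$ is dispatched by a Burnside-style argument. First note that since $p-1\mid p^2-1$, the hypothesis $(|G|,p^2-1)=1$ implies $(|G|,p-1)=1$, which is the standing hypothesis of Theorem~B.

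When $|P|>p^2$, a 2-minimal subgroup of $P$ is a subgroup of order $p^2$ (following the pattern of the ``2-maximal'' convention used in Corollary~\ref{cm1}), so $1<p^2<|P|$ and the hypothesis of Theorem~B is met with $m=2$. Condition~(i) ($m\ge 2$) of Theorem~B then yields the $p$-nilpotency of $G$ immediately.

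When $|P|\le p^2$ the 2-minimal hypothesis is vacuous or trivially met, and I argue directly. Since $|P|\in\{1,p,p^2\}$, the group $P$ is abelian, so $P\subseteq C_N(P)$ and $N_N(P)/C_N(P)$ is a $p'$-subgroup of $\mathrm{Aut}(P)$. An inspection ($|\mathrm{Aut}(Z_p)|=p-1$, $|\mathrm{Aut}(Z_{p^2})|=p(p-1)$, $|\mathrm{Aut}(Z_p\times Z_p)|=p(p-1)(p^2-1)$) shows that the $p'$-part of $|\mathrm{Aut}(P)|$ has prime divisors only among those of $p^2-1$. The coprimality $(|G|,p^2-1)=1$ therefore forces $N_N(P)=C_N(P)$, and Burnside's normal $p$-complement theorem (essentially the content of Lemma~\ref{B-p}) gives that $N$ is $p$-nilpotent. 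Let $Q$ be the (characteristic) normal $p$-complement of $N$, so $Q\unlhd G$. In $G/Q$ the normal subgroup $N/Q\cong P$ is abelian, and the same coprimality kills the $p'$-part of the quotient $(G/Q)/C_{G/Q}(N/Q)\hookrightarrow \mathrm{Aut}(P)$; combined with the $p$-nilpotency of $(G/Q)/(N/Q)\cong G/N$, a routine central-extension/Schur--Zassenhaus step produces a normal Hall $p'$-subgroup of $G$, completing the argument.

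The main obstacle is precisely this low-Sylow case: the 2-minimal hypothesis is uninformative there, so the conclusion must come entirely from the arithmetic hypothesis. The resolution uses the full strength $(|G|,p^2-1)=1$ rather than merely $(|G|,p-1)=1$, because $|\mathrm{Aut}(Z_p\times Z_p)|$ carries an extra factor of $p+1$ that the weaker hypothesis would not kill; this is the arithmetic reason the corollary requires coprimality with $p^2-1$ instead of just $p-1$.
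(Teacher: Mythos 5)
Your proof is correct and follows the route the paper clearly intends (it states Corollary~\ref{cm2} without proof): for $|P|>p^2$ one applies Theorem~B with $m=2$ via condition (i), and for $|P|\le p^2$ one falls back on a Burnside-type argument in the spirit of Lemma~\ref{B-p} and of the edge cases treated in the corollaries of Theorem~A. Your handling of the small-Sylow case is in fact slightly more careful than the paper's sketches elsewhere, since you correctly note that $(|G|,p^2-1)=1$ only forces $N_G(P)/C_G(P)$ to be a $p$-group rather than trivial (so Lemma~\ref{B-p} does not apply verbatim), and you supply the extra central-extension/Schur--Zassenhaus step needed to conclude.
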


In Corollaries \ref{cm1} and \ref{cm2}, the hypothesis ``$(|G|, p^2-1)=1$'' can not be replaced by
``$(|G|, p-1)=1$'', for example $G=N=A_4$ and $p=2$. Moreover, counter example also exists when $p$ is odd.

\begin{example}
 Let $G = \langle a, b\rangle\rtimes \langle x\rangle$, where $a^5 = b^5 = x^3 = 1, ab = ba$ and
 $a^x = ab, b^x = a^2b^3$. Then $G$ is not 5-nilpotent. Let $P=\langle a, b\rangle$. Then $P$
 is the Sylow 5-subgroup of $G$. The 2-minimal subgroup is $P$ and 2-maximal subgroup of $P$ is 1,
  which are both normal in $G$.

\end{example}

The following lemma shows that if $p$ is minimal and is odd, then the  hypothesis ``$(|G|, p^2-1)=1$''
can be delete, and when $p=2$, this hypothesis can be replaced by $G$ is $A_4$-free.

\begin{lemma}{\rm (\cite[Lemma 2.8]{L-SK-SC})}
    Let $p$ be the minimal prime divisor of the order of a group $G$. Assume that
$G$ is $A_4$-free and $L$ is a normal subgroup of $G$. If $G/L$ is $p$-nilpotent and $p^3 \nmid |L|$, then $G$ is
$p$-nilpotent.
\end{lemma}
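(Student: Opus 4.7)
The plan is to induct on $|G|$, reducing to a well-understood minimal normal subgroup of $G$ lying inside $L$. Take such $K\le L$; then the pair $(G/K, L/K)$ inherits all the hypotheses: $(G/K)/(L/K)\cong G/L$ is $p$-nilpotent, $p^3\nmid|L/K|$, $G/K$ is $A_4$-free as a quotient of $G$, and $p$ remains the minimal prime divisor of $|G/K|$ (or else $G/K$ is a $p'$-group, which is trivially $p$-nilpotent). By induction, $G/K$ is $p$-nilpotent, giving a normal subgroup $M\unlhd G$ with $K\le M$, $M/K$ a $p'$-group, and $G/M$ a $p$-group.

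If $K$ is a $p'$-group then $M$ itself is a normal $p'$-subgroup of $G$ with $p$-power index, so $G$ is $p$-nilpotent. Otherwise $K$ is an elementary abelian $p$-group with $|K|\in\{p,p^2\}$, and the key step is to show $G/C_G(K)$ is a $p$-group. For $|K|=p$, $|\mathrm{Aut}(K)|=p-1$ is coprime to $|G|$ by the minimality of $p$, so $G=C_G(K)$. For $|K|=p^2$ cyclic, $|\mathrm{Aut}(K)|=p(p-1)$, so again only the $p$-part survives. For $|K|=p^2$ elementary abelian, $\mathrm{Aut}(K)\cong GL_2(p)$ has order $p(p-1)^2(p+1)$: when $p$ is odd every prime factor of $(p-1)(p+1)$ is strictly less than $p$ and hence coprime to $|G|$; when $p=2$ a $3$-element in $G/C_G(K)$ would act fixed-point-freely on $K\cong V_4$, producing a subgroup $K\rtimes\langle x\rangle\cong A_4$, contradicting the $A_4$-freeness of $G$.

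With $G/C_G(K)$ a $p$-group, every $p'$-element of $G$ centralizes $K$. Apply the Schur--Zassenhaus theorem inside $M$ to write $M=K\rtimes H$ for some Hall $p'$-subgroup $H$; since $H$ consists of $p'$-elements centralizing $K$, in fact $M=K\times H$. Then $H$ is the unique Hall $p'$-subgroup of $M$, hence characteristic in $M$ and normal in $G$; moreover $G/H$ is a $p$-group because $M/H\cong K$ is a $p$-group with $p$-group quotient $(G/H)/(M/H)\cong G/M$. Therefore $H$ is a normal $p$-complement of $G$. The main obstacle is the last automorphism-group case $|K|=p^2$ elementary abelian at $p=2$, which is precisely where the $A_4$-free hypothesis is doing its work; the other cases follow cleanly from the minimality of $p$.
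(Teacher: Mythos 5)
The paper gives no proof of this lemma (it is quoted from the Li--Skiba reference), so your argument can only be judged on its own terms. The overall plan is sound: the induction, the pull-back of the normal $p$-complement $M/K$ of $G/K$, the computation of $|\mathrm{Aut}(K)|$ in each case (including the $GL_2(2)$ case, where the $A_4$-free hypothesis is correctly brought to bear), and the final Schur--Zassenhaus splitting $M=K\times H$ with $H$ characteristic in $M$ are all correct. However, there is one genuine gap: the assertion that the minimal normal subgroup $K\le L$ is ``either a $p'$-group or an elementary abelian $p$-group of order $p$ or $p^2$.'' A minimal normal subgroup is a direct product of isomorphic simple groups, and the hypothesis $p^3\nmid|L|$ only bounds the $p$-part of $|K|$ by $p^2$; it does not by itself exclude the possibility that $K$ is a nonabelian characteristically simple group of order divisible by $p$, and your case analysis silently skips this.

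The excluded case is real and needs an argument in each half. For $p$ odd, minimality of $p$ forces $|G|$ to be odd, so $G$ is soluble by the Feit--Thompson theorem and every minimal normal subgroup is elementary abelian. For $p=2$, a nonabelian simple group $S$ has $|S|_2\ge 4$ (its order is even by Feit--Thompson, and a Sylow $2$-subgroup of order $2$ is cyclic, which would give a normal $2$-complement by Burnside's theorem); hence $K$ would be a single simple group with Sylow $2$-subgroup of order $4$, necessarily a Klein four group $V$ (cyclic is again ruled out by Burnside), and since $K$ has no normal $2$-complement, Burnside forces an element of order $3$ in $N_K(V)/C_K(V)$, producing an $A_4$-section of $G$ and contradicting $A_4$-freeness. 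So the lemma does hold along your route, but this is a second, essential place where both Feit--Thompson and the $A_4$-free hypothesis must be invoked. Two minor points: the subcase ``$|K|=p^2$ cyclic'' cannot occur, since a minimal normal abelian subgroup is elementary abelian; and ``subgroup $K\rtimes\langle x\rangle\cong A_4$'' should read ``section,'' since the $3$-element witnessing the nontrivial action need not have order exactly $3$. A variant that avoids minimal normal subgroups altogether: let $H/L$ be the normal $p$-complement of $G/L$; then $H\unlhd G$ has abelian Sylow $p$-subgroups of order at most $p^2$, your $\mathrm{Aut}$ computations together with Burnside's normal $p$-complement theorem show $H$ is $p$-nilpotent, and its normal $p$-complement is characteristic in $H$, normal in $G$, and of index a $p$-power.
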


4. In the literature one can also find  many special cases of Theorems A and B, for example:

\begin{corollary}{\rm(\cite[Theorem 3.1]{Han-p-nil})}
    Let $p$ be an odd prime dividing the order of $G$ and  $P$  a Sylow $p$-subgroup of $G$.
     Suppose $N_G(P)$ is $p$-nilpotent and  there exists a subgroup $D$ of $P$ with $1<|D|<|P|$
     such that every subgroup $H$ of $P$ with order $|D|$  is s-semipermutable in $G$. Then $G$ is $p$-nilpotent.
\end{corollary}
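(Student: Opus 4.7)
The plan is to deduce this as a direct application of Theorem A with $N=G$. Taking $N=G$, the quotient $G/N=1$ is trivially $p$-nilpotent, $P$ is a Sylow $p$-subgroup of $N$, and the hypothesis gives us that $N_G(P)$ is $p$-nilpotent, so most of the conditions of Theorem A are free. Setting $|D|=p^m$, the strict inequality $1<|D|<|P|$ is exactly the assumption $1<p^m<|P|$ needed in Theorem A.

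The only nontrivial thing to verify is that every subgroup $H$ of $P$ of order $p^m$ is $\Pi$-normal in $G$. For this I would invoke the fact (part of the motivation articulated at the start of the paper and established in \cite{pi}) that s-semipermutability is one of the generalized permutability properties which implies $\Pi$-property. That is, if $H$ is s-semipermutable in $G$, then $H$ has $\Pi$-property in $G$. Combined with Lemma \ref{1}(2), which says any subgroup with $\Pi$-property is automatically $\Pi$-normal, we get that every such $H$ is $\Pi$-normal in $G$. In particular the ``not having $p$-nilpotent supplement'' qualifier in the hypothesis of Theorem A is vacuously handled, since \emph{all} subgroups of order $p^m$ are $\Pi$-normal.

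Once this is established, applying Theorem A immediately yields $G$ is $p$-nilpotent. The only potential obstacle is justifying the implication ``s-semipermutable $\Rightarrow$ $\Pi$-property'' cleanly; if the reader is not willing to accept this from \cite{pi}, one would need a short direct argument, checking chief factor by chief factor that the index $|G/K:N_{G/K}(HK/K\cap L/K)|$ is a $\pi(HK/K\cap L/K)$-number using that $H$ permutes with all Sylow subgroups of orders coprime to $|H|$. But given the explicit statement in the introduction that $\Pi$-property unifies these generalized permutabilities, a citation to \cite{pi} is the natural route and the proof reduces to a one-line application of Theorem A.
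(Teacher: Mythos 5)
Your proof is correct and is exactly the derivation the paper intends: the corollary is merely listed as a special case of Theorem A, and the reduction via $N=G$, $p^m=|D|$, together with the fact from \cite{pi} that s-semipermutable subgroups satisfy the $\Pi$-property (hence are $\Pi$-normal by Lemma \ref{1}(2)), is the whole argument. No gaps.
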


\begin{corollary}{\rm(\cite[Theorem 3.2]{Han-p-nil})}
    Let $p$ be a priming dividing the order of $G$ satisfying
$(|G|, p-1) = 1$ and $P$ be a Sylow $p$-group of $G$. Suppose there exists a
nontrivial subgroup $D$ of $P$ such that $1 < |D| < |P|$ and every subgroup $H$
with order $|D|$ and $2|D|$ (if $P$ is a non-abelian 2-group and $|P : D| > 2$) is
s-semipermutable in $G$, then $G$ is a $p$-nilpotent group.
\end{corollary}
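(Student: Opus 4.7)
The plan is to derive this corollary directly from Theorem B applied with the normal subgroup $N$ taken to be $G$ itself. Then $G/N$ is trivial (hence $p$-nilpotent), $P$ is a Sylow $p$-subgroup of $N$, and the coprimality hypothesis $(|G|, p-1) = 1$ is exactly what the corollary assumes. Writing $|D| = p^m$, the condition $1 < |D| < |P|$ of the corollary is identical to the condition $1 < p^m < |P|$ of Theorem B. The remaining work is to verify that the $s$-semipermutability assumption forces the relevant subgroups to be $\Pi$-normal in $G$, and then to check that at least one of the cases (i)--(iv) of Theorem B holds under the hypotheses given.

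For the first point I would invoke the results of \cite{pi}, where it is shown that $s$-semipermutability (one of the generalized permutability notions alluded to in the Introduction) implies $\Pi$-property; then Lemma \ref{1}(2) converts $\Pi$-property into $\Pi$-normality. Consequently every subgroup of $P$ of order $p^m$ is automatically $\Pi$-normal in $G$, a strictly stronger conclusion than the ``$\Pi$-normal or having a $p$-nilpotent supplement in $G$'' assumption required by Theorem B. The same reasoning applies to the subgroups of order $2|D|$ in the exceptional clause of the corollary.

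To finish, I would run through which case of Theorem B is activated. If $m \ge 2$, case (i) applies. If $p$ is odd or $P$ is abelian, case (ii) applies. The only residual case is $p = 2$, $m = 1$, and $P$ non-abelian. If further $|P : D| > 2$, the corollary's extra hypothesis yields that every subgroup of $P$ of order $2|D| = 4$ is $s$-semipermutable, hence $\Pi$-normal; in particular every cyclic subgroup of order $4$ is $\Pi$-normal in $G$, placing us in case (iii). The subcase $|P : D| = 2$ with $|D| = 2$ would force $|P| = 4$ and thus $P$ abelian, contradicting our standing assumption, so it cannot occur.

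The main (essentially only) potential obstacle is the implication ``$s$-semipermutable $\Rightarrow$ $\Pi$-property'', which I am relying on from \cite{pi}; granted this, the argument reduces entirely to the case-bookkeeping above. Were this implication not explicitly recorded in \cite{pi}, one would have to supply a short direct verification by examining the normalizers of intersections $HK/K \cap L/K$ for each $G$-chief factor $L/K$, using that $s$-semipermutable subgroups permute with the Sylow $q$-subgroups of $G$ for primes $q \ne p$.
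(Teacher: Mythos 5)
Your proposal is correct and is exactly the intended derivation: the paper offers no proof of this corollary, presenting it only as a special case of Theorem B, and your instantiation $N=G$, the reduction of $s$-semipermutability to $\Pi$-normality via \cite{pi} together with Lemma \ref{1}(2), and the case-check of (i)--(iii) (including the observation that $p=2$, $|D|=2$, $|P:D|=2$ forces $|P|=4$ and hence $P$ abelian) fill in the details faithfully. The only point to verify against \cite{pi} is that primary $s$-semipermutable subgroups indeed have the $\Pi$-property there, which is precisely the unification the present paper's introduction attributes to that reference.
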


\begin{corollary}{\rm(\cite[Theorem 3.1]{WangLF})}
     Let $p$ be an odd prime dividing the order of $G$ and  $P$  a Sylow $p$-subgroup of $G$.
     If $N_G(P)$ is $p$-nilpotent and  every maximal subgroup of $P$   is s-semipermutable in $G$.
     Then $G$ is $p$-nilpotent.
\end{corollary}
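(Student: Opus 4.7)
The plan is to derive this directly from Corollary \ref{c1} by taking $N=G$. With that choice $G/N$ is trivial and thus $p$-nilpotent, $P$ remains a Sylow $p$-subgroup of $N$, and the hypothesis that $N_G(P)$ is $p$-nilpotent is already in place. What remains to check is that every maximal subgroup of $P$ failing to have a $p$-nilpotent supplement in $G$ is $\Pi$-normal in $G$. I will in fact prove the stronger assertion that every s-semipermutable subgroup of $G$ is $\Pi$-normal in $G$, which makes the ``no $p$-nilpotent supplement'' clause vacuous.

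To establish the stronger assertion I rely on the unification theme from \cite{pi} recalled in the Introduction: s-semipermutability is among the generalised permutability conditions that imply the $\Pi$-property in $G$. Granted this, Lemma \ref{1}(2) immediately upgrades $\Pi$-property to $\Pi$-normality, so every s-semipermutable subgroup is $\Pi$-normal in $G$. Applied to the hypothesis that each maximal subgroup of $P$ is s-semipermutable in $G$, this yields that every maximal subgroup of $P$ is $\Pi$-normal in $G$, in particular those without a $p$-nilpotent supplement. Corollary \ref{c1} then concludes that $G$ is $p$-nilpotent.

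The argument is essentially a one-line specialisation of Corollary \ref{c1}, and I do not anticipate any real obstacle. The only non-mechanical input is the citation of the ``s-semipermutable $\Rightarrow$ $\Pi$-property'' translation from \cite{pi}; this is external to the present proofs but is precisely the kind of reduction that Corollary \ref{c1} was designed to absorb, and it is what lets the statement be presented as a corollary rather than proved from scratch.
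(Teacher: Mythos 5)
Your derivation is correct and is exactly the route the paper intends: the statement is listed as a special case of Theorem A, obtained via Corollary \ref{c1} with $N=G$, the only external input being that an s-semipermutable $p$-subgroup has the $\Pi$-property in $G$ (from \cite{pi}) and hence is $\Pi$-normal by Lemma \ref{1}(2). The one small caveat is that the translation from \cite{pi} is stated for primary ($p$-)subgroups rather than for arbitrary s-semipermutable subgroups, but that suffices here since the subgroups in question are maximal subgroups of the Sylow $p$-subgroup $P$.
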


\begin{corollary}{\rm(\cite[Theorem 3.3]{WangLF})}
     Let $p$ be the smallest prime number dividing the order of $G$ and  $P$  a Sylow $p$-subgroup of $G$.
      If  every maximal subgroup of $P$   is s-semipermutable in $G$. Then $G$ is $p$-nilpotent.
\end{corollary}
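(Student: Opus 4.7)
The plan is to derive this corollary as an immediate consequence of the maximal-subgroup corollary of Theorem B appearing just above it. The crux is twofold: first, that the smallest-prime hypothesis automatically delivers the coprimality $(|G|,p-1)=1$ required by Theorem B; second, that s-semipermutability is one of the generalized permutabilities shown in \cite{pi} to entail the $\Pi$-property and hence (by Lemma \ref{1}(2)) $\Pi$-normality.

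First I would observe that every prime dividing $p-1$ is strictly smaller than $p$, so since $p$ is the smallest prime divisor of $|G|$, no such prime divides $|G|$; therefore $(|G|,p-1)=1$. This already puts us in the setting of Theorem B. Next I would take $N=G$, so that $G/N$ is trivially $p$-nilpotent and $P$ is a Sylow $p$-subgroup of $N$, as demanded by the corollary.

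Now I would invoke the foundational result from \cite{pi} that s-semipermutable subgroups satisfy the $\Pi$-property; combined with Lemma \ref{1}(2), this upgrades the hypothesis ``every maximal subgroup of $P$ is s-semipermutable in $G$'' to ``every maximal subgroup of $P$ is $\Pi$-normal in $G$''. A fortiori, every maximal subgroup of $P$ not having a $p$-nilpotent supplement in $G$ is $\Pi$-normal in $G$, which is exactly what the maximal-subgroup corollary of Theorem B requires.

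Finally I would split on $|P|$. If $|P|>p$, then all maximal subgroups of $P$ have common order $p^m$ with $1<p^m<|P|$, and the maximal-subgroup corollary of Theorem B applies verbatim to conclude $G$ is $p$-nilpotent. If $|P|=p$, the hypothesis becomes vacuous, and I would handle it directly: $N_G(P)/C_G(P)$ embeds into $\operatorname{Aut}(P)$, a group of order $p-1$ coprime to $|G|$, so $N_G(P)=C_G(P)$, and Lemma \ref{B-p} applied with $N=G$ gives $p$-nilpotency. The only non-bookkeeping ingredient is the ``s-semipermutable $\Rightarrow$ $\Pi$-property'' implication from \cite{pi}; with that in hand, the argument is essentially a one-line reduction to Theorem B, and I expect no serious obstacle.
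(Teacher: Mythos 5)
Your derivation is correct and is exactly the route the paper intends: it states this result without proof as a special case of Theorem B, to be obtained by combining the smallest-prime observation $(|G|,p-1)=1$, the choice $N=G$, and the fact from \cite{pi} that s-semipermutable subgroups have the $\Pi$-property (hence are $\Pi$-normal), with the degenerate case $|P|=p$ settled by Burnside via Lemma \ref{B-p} just as in the paper's proof of Corollary \ref{c1}. No gaps.
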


\begin{corollary}{\rm(\cite[Theorem 3.5]{WangLF})}
     Let $p$ be the smallest prime number dividing the order of $G$ and  $P$  a Sylow $p$-subgroup of $G$.
     If  every 2-maximal subgroup of $P$   is s-semipermutable in $G$ and $G$ is $A_4$-free. Then $G$ is $p$-nilpotent.
\end{corollary}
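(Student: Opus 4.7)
The plan is to reduce this corollary to Corollary~\ref{cm1} in most cases, invoking the preceding lemma from \cite{L-SK-SC} for the residual configuration. As established in \cite{pi}, s-semipermutability in $G$ implies the $\Pi$-property, so by Lemma~\ref{1}(2) every 2-maximal subgroup of $P$ is $\Pi$-normal in $G$. Taking $N = G$, the quotient $G/N = 1$ is trivially $p$-nilpotent and $P$ is a Sylow $p$-subgroup of $N$, so the $\Pi$-normality hypothesis of Corollary~\ref{cm1} is satisfied.

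The key arithmetic point is that, because $p$ is the smallest prime divisor of $|G|$, any prime $q$ dividing $p-1$ satisfies $q < p$, and hence $(|G|, p-1) = 1$; and if $p$ is odd, any prime $q$ dividing $p+1$ also satisfies $q < p$ (since $p+1$ is even and greater than $2$, hence composite, and clearly $p \nmid p+1$). Therefore $(|G|, p^2-1) = 1$ holds automatically when $p$ is odd, and also when $p = 2$ with $3 \nmid |G|$. In each of these cases Corollary~\ref{cm1} yields directly that $G$ is $p$-nilpotent.

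The remaining case is $p = 2$ with $3 \mid |G|$, which is exactly where the $A_4$-free hypothesis becomes essential (without it, $G = A_4$ is a counterexample). Here I would induct on $|G|$. If $|P| \leq 4$ then $2^3 \nmid |G|$, and the preceding lemma of \cite{L-SK-SC} applied with $L = G$ gives 2-nilpotency. If $|P| \geq 16$, the 2-maximal subgroups of $P$ have order $p^m$ with $m \geq 2$, so condition~(i) of Theorem~B applies. If $|P| = 8$ and $P$ is abelian, condition~(ii) of Theorem~B applies. Thus we are reduced to $|P| = 8$ with $P$ nonabelian, i.e., $P \cong D_8$ or $Q_8$.

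For this last subcase, the plan is to pick a minimal normal subgroup $L$ of $G$ and apply induction to $G/L$. Following the standard reductions from the proofs of Theorems~A and~B (trivializing $O_{2'}(G)$ and invoking Lemma~\ref{12}), one sees that $L$ is an elementary abelian 2-group; since $D_8$ and $Q_8$ contain no elementary abelian subgroup of order exceeding~$4$, this forces $|L| \leq 4$. The main obstacle is to verify that the hypotheses descend to $G/L$—in particular, that every 2-maximal subgroup of the Sylow 2-subgroup $P/L$ of $G/L$ remains s-semipermutable in $G/L$, and that $G/L$ is still $A_4$-free. Once this descent is established, the inductive hypothesis yields that $G/L$ is 2-nilpotent, and since $|L|_2 \leq 4 < 2^3$, the lemma from \cite{L-SK-SC} then forces $G$ itself to be 2-nilpotent, completing the argument.
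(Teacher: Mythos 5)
Your overall route --- replace ``s-semipermutable'' by ``$\Pi$-normal'' via \cite{pi}, take $N=G$, feed the result into Theorem~B (resp.\ Corollary~\ref{cm1}), and use the lemma quoted from \cite{L-SK-SC} to absorb the $A_4$-free hypothesis --- is exactly the derivation the paper intends (it gives no explicit proof, only the remark preceding that lemma), and your arithmetic observation that $p$ minimal and odd forces $(|G|,p^2-1)=1$ is correct, as is the case split for $p=2$. But there is a genuine gap: the subcase $p=2$, $|P|=8$, $P$ nonabelian is precisely where none of conditions (i)--(iv) of Theorem~B is available (the $2$-maximal subgroups have order $2$, so $m=1$; $P$ is nonabelian; nothing is assumed about the cyclic subgroups of order $4$; and $G$ need not be soluble). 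You correctly isolate this case, but then only announce that ``the main obstacle is to verify that the hypotheses descend to $G/L$'' and stop. As written the argument is incomplete exactly at the one point where the reduction to Theorem~B fails.

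The gap is closable, and more cheaply than you anticipate: no descent of s-semipermutability to $G/L$ is needed. After reducing to $O_{2'}(G)=1$ (the hypotheses do pass to $G/O_{2'}(G)$, since images of s-semipermutable subgroups are s-semipermutable and quotients of $A_4$-free groups are $A_4$-free), let $L$ be a minimal normal subgroup of $G$. Because $|P|=8$, every subgroup of $P$ of order $2$ is $2$-maximal in $P$, hence $\Pi$-normal in $G$; applying Lemma~\ref{12} to an order-$2$ subgroup of $L\cap P$ (nontrivial since $O_{2'}(G)=1$) shows $L$ is a $2$-group, hence elementary abelian, hence of order at most $4$ because neither $D_8$ nor $Q_8$ contains an elementary abelian subgroup of order $8$. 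Then $|P/L|\le 4$, so $2^3\nmid|G/L|$ and the lemma from \cite{L-SK-SC}, applied to $G/L$ with the normal subgroup taken to be $G/L$ itself, already gives that $G/L$ is $2$-nilpotent; applying it once more to $G$ with the normal subgroup $L$ (of order at most $4$) gives that $G$ is $2$-nilpotent. So the induction on $G/L$ and the hypothesis transfer you flag as the main obstacle are in fact unnecessary; you should replace that paragraph by this two-step application of the quoted lemma.
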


\end{document}